\renewcommand{\thefootnote}{\fnsymbol{footnote}}
\long\def\sfootnote[#1]#2{\begingroup
\def\thefootnote{\fnsymbol{footnote}}\footnote[#1]{#2}\endgroup}
\newtheorem{theorem}{Theorem}[section]
\newtheorem{definition}[theorem]{Definition}
\newtheorem{lemma}[theorem]{Lemma}
\newtheorem{example}[theorem]{Example}
\newtheorem{remark}[theorem]{Remark}
\newenvironment{proof}{\noindent\mbox{\bf Proof.}}
{\hfill\mbox{\ding{113}}\bigskip}
\begin{document}
\pagestyle{fancy}
\lhead[page \thepage \ (of \pageref{LastPage})]{}
\chead[{\bf Herbrand Consistency of  Finite Fragments of ${\rm I\Delta_0}$}]{{\bf Herbrand Consistency of  Finite Fragments of ${\rm I\Delta_0}$}}
\rhead[]{page \thepage \ (of \pageref{LastPage})}
\lfoot[\copyright\ {\sf Saeed Salehi 2011}]{$\varoint^{\Sigma\alpha\epsilon\epsilon\partial}_{\Sigma\alpha\ell\epsilon\hslash\imath}\centerdot${\footnotesize {\rm ir}}}
\cfoot[{\footnotesize {\tt http:\!/\!/saeedsalehi.ir/}}]{{\footnotesize {\tt http:\!/\!/saeedsalehi.ir/}}}
\rfoot[$\varoint^{\Sigma\alpha\epsilon\epsilon\partial}_{\Sigma\alpha\ell\epsilon\hslash\imath}\centerdot${\footnotesize {\rm ir}}]{\copyright\ {\sf Saeed Salehi 2011}}
\renewcommand{\headrulewidth}{1pt}
\renewcommand{\footrulewidth}{1pt}
\thispagestyle{empty}

% LOGO: \includegraphics[scale=0.75]{logo.jpg}
% PHOTO: \includegraphics[scale=0.75]{saeedsalehi.jpg}
\begin{center}
\begin{table}
\begin{tabular}{| c | l  || l | c |}
\hline
 \multirow{7}{*}{\includegraphics[scale=0.75]{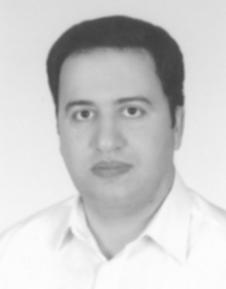}}&    &  &
 \multirow{7}{*}{ \ \ ${\huge \varoint^{\Sigma\alpha\epsilon\epsilon\partial}_{\Sigma\alpha\ell\epsilon\hslash\imath}\centerdot}${{\rm ir}}} \ \ \ \ \\
 &     \ \ {\large{\sc Saeed Salehi}}  \ \  \ & \ \    Tel: \, +98 (0)411 339 2905  \ \  &  \\
 &   \ \ Department of Mathematics \ \  \ & \ \ Fax: \ +98 (0)411 334 2102 \ \  & \\
 &   \ \ University of Tabriz \ \ \  & \ \ E-mail: \!\!{\tt /root}{\sf @}{\tt SaeedSalehi.ir/} \ \  &  \\
 &  \ \ P.O.Box 51666--17766 \ \ \ &   \ \ \ \ {\tt /SalehiPour}{\sf @}{\tt TabrizU.ac.ir/} \ \  &  \\
 &   \ \ Tabriz, Iran \ \ \ & \ \ Web: \  \ {\tt http:\!/\!/SaeedSalehi.ir/} \ \ &  \\
 &    &  &  \\
 \hline
\end{tabular}
\end{table}
\end{center}

%\vspace{-4em}
%\noindent\underline{\centerline{ }}

\vspace{2em}

\begin{center}
{\bf {\Large Herbrand Consistency of  Some Finite Fragments of

\medskip

Bounded Arithmetical Theories
}}
\end{center}

\vspace{2em}
%\centerline{$\overline{\smile\!\!\smile\!\!\smile\!\!\smile\!\!\smile\!\!\smile\!\!\smile\!\!
%\smile\!\!\smile\!\!\smile\!\!\smile\!\!\smile\!\!\smile\!\!\smile\!\!\smile\!\!\smile\!\!\smile\!\!\smile\!\!\smile\!\!\smile
%\!\!\smile\!\!\smile\!\!\smile\!\!\smile\!\!\smile\!\!\smile\!\!\smile\!\!\smile\!\!\smile\!\!
%\smile\!\!\smile\!\!\smile\!\!\smile\!\!\smile\!\!\smile\!\!\smile\!\!\smile\!\!\smile\!\!\smile\!\!\smile\!\!\smile
%\!\!\smile\!\!\smile\!\!\smile\!\!\smile\!\!\smile\!\!\smile\!\!\smile\!\!\smile
%\!\!\smile\!\!\smile\!\!\smile\!\!\smile\!\!\smile\!\!\smile\!\!\smile\!\!\smile\!\!\smile\!\!\smile\!\!
%\smile\!\!\smile\!\!\smile\!\!\smile}$}
\begin{abstract}
We formalize the notion of Herbrand Consistency in an appropriate way for bounded arithmetics, and show the existence of a finite fragment of ${\rm I\Delta_0}$ whose Herbrand Consistency is not  provable in the thoery ${\rm I\Delta_0}$. We also show the existence of an ${\rm I\Delta_0}-$derivable $\Pi_1-$sentence  such that ${\rm I\Delta_0}$ cannot prove its Herbrand Consistency.

\bigskip

\bigskip

{\footnotesize
\noindent {\bf Acknowledgements} \
This research is partially supported by grant  No 89030062 of the Institute for Research in Fundamental Sciences (IPM), Tehran, Iran.
}

\centerline{${\backsim\!\backsim\!\backsim\!\backsim\!\backsim\!\backsim\!\backsim\!
\backsim\!\backsim\!\backsim\!\backsim\!\backsim\!\backsim\!\backsim\!
\backsim\!\backsim\!\backsim\!\backsim\!\backsim\!\backsim\!\backsim\!
\backsim\!\backsim\!\backsim\!\backsim\!\backsim\!\backsim\!\backsim\!
\backsim\!\backsim\!\backsim\!\backsim\!\backsim\!\backsim\!\backsim\!
\backsim\!\backsim\!\backsim\!\backsim\!\backsim\!\backsim\!\backsim\!
\backsim\!\backsim\!\backsim\!\backsim\!\backsim\!\backsim\!\backsim\!
\backsim\!\backsim\!\backsim\!\backsim\!\backsim\!\backsim\!\backsim\!
\backsim\!\backsim\!\backsim\!\backsim\!\backsim\!\backsim\!\backsim\!
\backsim\!\backsim\!\backsim\!\backsim\!\backsim\!\backsim\!\backsim\!
\backsim\!\backsim\!\backsim}$}

\bigskip

\noindent {\bf 2010 Mathematics Subject Classification}:  03F40 $\cdot$  03F25 $\cdot$ 03F30.

\noindent {\bf Keywords}: Herbrand Consistency $\cdot$ Bounded Arithmetic $\cdot$ G\"odel's Second Incompleteness Theorem.

\end{abstract}

\bigskip

\bigskip

\bigskip

\bigskip

\bigskip

\bigskip

\bigskip

\bigskip

\bigskip

\vfill

\hspace{.75em} \fbox{\textsl{\footnotesize Date: 09 October 2011 (09.10.11)}}

\vfill

\bigskip
\noindent\underline{\centerline{}}
\centerline{page 1 (of \pageref{LastPage})}

%%%
%%% the paper begins ...
%%%

\newpage
\setcounter{page}{2}
\SetWatermarkAngle{65}
\SetWatermarkLightness{0.925}
%\SetWatermarkFontSize{30cm}
\SetWatermarkScale{2.25}
\SetWatermarkText{\!\!\!\!\!\!\!\!\!\!\!\!\!\!\!\!\!\!\!\!\!\!
{\sc MANUSCRIPT (Submitted)}}
%%%%%%%%%%%%%%%%%%%%%%%%%%%%%%%%%%%%
%%%%%%%%%%%%%%%%%%%%%%%%%%%%%%%%%%%%
%%%%%%%%%%%%%%%%%%%%%%%%%%%%%%%%%%%%
%%%%%%%%%%%%%%%%%%%%%%%%%%%%%%%%%%%%
%%%%%%%%%%%%%%%%%%%%%%%%%%%%%%%%%%%%

%%%%%%%%%%%%%%%%%%%%%%%%%%%%%%%%%%%%
%%%%%%%%%%%%%%%%%%%%%%%%%%%%%%%%%%%%
%%%%%%%%%%%%%%%%%%%%%%%%%%%%%%%%%%%%
%%%%%%%%%%%%%%%%%%%%%%%%%%%%%%%%%%%%
%%%%%%%%%%%%%%%%%%%%%%%%%%%%%%%%%%%%
%--------------------------------------

\section{Introduction}\label{intro}
A consequence of G\"odel's Second Incompleteness Theorem is $\Pi_1-$separation of some mathematical theories; for example ${\rm ZFC}$ is not $\Pi_1-$conservative over ${\rm PA}$ since ${\rm ZFC\vdash Con(PA)}$ but (by G\"odel's theorem) ${\rm PA\not\vdash Con(PA)}$, where ${\rm Con}$ is the consistency predicate. Inside ${\rm PA}$, the hierarchy $\{{\rm I\Sigma_n}\}_{n\geqslant 0}$ is not $\Pi_1-$conservative, since ${\rm I\Sigma_{n+1}\vdash Con(I\Sigma_n)}$ (but again ${\rm I\Sigma_{n}\not\vdash Con(I\Sigma_n)}$). As for the bounded arithmetics, we only know that the elementary arithmetic ${\rm I\Delta_0+Exp}$ is not $\Pi_1-$conservative over ${\rm I\Delta_0+\bigwedge_j\Omega_j}$ (see Corollary 5.34 of~\cite{HP98}).  One candidate for $\Pi_1-$separating ${\rm I\Delta_0+Exp}$ from ${\rm I\Delta_0}$ was the Cut-Free Consistency of ${\rm I\Delta_0}$ (see~\cite{PaWi81}): it was already known that ${\rm I\Delta_0+Exp\vdash CFCon(I\Delta_0)}$ and it was presumed that ${\rm I\Delta_0\not\vdash CFCon(I\Delta_0)}$, where ${\rm CFCon}$ stands for Cut-Free Consistency. Though this presumption took rather a long to be established (see~\cite{Wil07}), it opened a new line of research.

The problem of provability (or unprovability) of the cut-free consistency of weak arithmetics is an interesting (double) generalization of G\"odel's Second Incompleteness Theorem:  the theory (being restricted to bounded or weak arithmetics) and also the consistency predicate are both weakened. Here, we do not intend to outline the history of this research line, and refer the reader to~\cite{Sal11a,Sal11b}. Nevertheless, we list some prominent results obtained so far, to put our new result in perspective.

Herbrand Consistency is denoted by ${\rm HCon}$  and (Semantic) Tableau Consistency by ${\rm TabCon}$.
Adamowicz (with Zbierski in 2001~\cite{Ada01} and) in 2002~\cite{Ada02} showed that ${\rm I\Delta_0+\Omega_m\not\vdash HCon(I\Delta_0+\Omega_m)}$ for ${\rm m\geqslant 2}$. She had already shown the unprovability ${\rm I\Delta_0+\Omega_1\not\vdash TabCon(I\Delta_0+\Omega_1)}$ in 1996 (but appeared in 2001 as~\cite{Ada96}). Salehi improved the result of~\cite{Ada02} in~\cite{Sal02} by showing that ${\rm I\Delta_0+\Omega_1\not\vdash HCon(I\Delta_0+\Omega_1)}$ (see also~\cite{Sal11b}) and the result of~\cite{Ada01} in~\cite{Sal01,Sal02} by showing $S\not\vdash{\rm HCon}(S)$ where $S$ is an ${\rm I\Delta_0}-$derivable $\Pi_2-$sentence. This reslt also implied that ${\rm I\Delta_0\not\vdash HCon(\overline{I\Delta_0})}$ holds for a re-axiomatization $\overline{{\rm I\Delta_0}}$ of ${\rm I\Delta_0}$. Willard~\cite{Wil02} showed in 2002 that ${\rm I\Delta_0\not\vdash TabCon(I\Delta_0)}$ and also ${\rm I\Delta_0\not\vdash HCon(I\Delta_0+\Omega_0)}$, where $\Omega_0$ is the axiom of the totality of the squaring function $\Omega_0: \forall x\exists y[y=x\cdot x]$. This was improved in~\cite{Sal11b} by showing ${\rm I\Delta_0\not\vdash HCon(I\Delta_0)}$, without using the $\Omega_0$ axiom. It was also proved in~\cite{Wil02} that $V\not\vdash{\rm HCon}(V)$ for an ${\rm I\Delta_0}-$derivable $\Pi_1-$sentence $V$. Ko{\l}odziejczyk~\cite{Kol06} showed in 2006 that the unprovability ${\rm I\Delta_0+\bigwedge_j\Omega_j\not\vdash HCon(I\Delta_0+\Omega_1)}$ holds; his result was stronger in a sense that it showed ${\rm I\Delta_0+\bigwedge_j\Omega_j\not\vdash HCon(S+\Omega_1)}$ for a finite fragment ${\rm S}\subseteq{\rm I\Delta_0}$.

In this paper we use an idea of an anonymous referee of~\cite{Sal11b} for defining evaluations in a more effective way (Definition~\ref{def-eval}) suitable for bounded arithmetics; this is a great step forward, noting our mentioning  in~\cite{Sal11b} that ``[o]ur definition of Herbrand Consistency is not bet suited for ${\rm I\Delta_0}$". We then partially answer the question proposed by the anonymous referee of~\cite{Sal11a} (see Conjecture~4.1 in~\cite{Sal11a}). The author is grateful to both the referees, for suggestions and inspirations.

We show the existence of a finite fragment $T$ of ${\rm I\Delta_0}$ such that ${\rm I\Delta_0\not\vdash HCon(}T{\rm )}$; this generalizes the result of~\cite{Sal11b}. We also show the existence of an ${\rm I\Delta_0}-$derivable $\Pi_1-$sentence $U$ such that ${\rm I\Delta_0\not\vdash HCon(}U{\rm )}$; this generalizes the main result of~\cite{Sal01,Sal02} and~\cite{Wil02}. For keeping the paper short, and to avoid repeating some technical details, we apologetically invite the reader to consult~\cite{Sal11a,Sal11b}. We also assume familiarity with the Bible of this field~\cite{HP98}.
\section{Herbrand Consistency of Arithmetical Theories}\label{sec:1}
For getting a unique Skolemized formula, it is more convenient to  negation normalize and rectify it.
\begin{definition}[Rectified Negation Normal Form]
A formula is in negation normal form when no implication symbol $\rightarrow$ appears in it, and the negation symbol $\neg$ appears behind the atomic formulas only.
A formula is rectified when different quantifiers refer to different variables and no variable appears both free and bound in the formula.
\hfill$\lozenge\!\!\!\!\!\lozenge$\end{definition}
 Any formula can be uniquely negation normalized by removing the implication connectives (replacing formulas of the form $A\rightarrow B$ with $\neg A\vee B$) and then pushing the negations inside the sub-formulas by de Morgan's Law, until they get to the atomic formulas. Renaming the variables can rectify any formula. Thus one can negation normalize and rectify a formula uniquely, up to a variable renaming.
\begin{definition}[Skolemization]
For any existential formula $\exists x A(x)$ with $m$($\geqslant 0$) free variables, let $\textswab{f}_{\exists x A(x)}$ be a new $m-$ary function symbol (which does not occur in $A$; cf.~\cite{Buss95}). For any rectified negation normal formula $\varphi$ we define $\varphi^S$ inductively:

$\bullet $ \ $\varphi^S=\varphi$ for atomic or negated-atomic formula $\varphi$

$\bullet $ \ $(\varphi\wedge\psi)^S=\varphi^S\wedge\psi^S$

$\bullet $ \ $(\varphi\vee\psi)^S=\varphi^S\vee\psi^S$

$\bullet $ \ $(\forall x\varphi)^S=\forall x\varphi^S$

$\bullet $ \ $(\exists x\varphi)^S=\varphi^S[\textswab{f}_{\exists x\varphi(x)}(\overline{y})/x]$
where $\overline{y}$ are the free variables of $\exists x\varphi(x)$.

\noindent Finally, the Skolemized form $\varphi^{\rm Sk}$ of the formula $\varphi$ is obtained by removing all the (universal) quantifiers of $\varphi^S$. The resulted formula is an open (quantifier-less) formula, with probably some free variables. If those (free) variables are substituted with some ground (variable-free) terms,  we obtain an {Skolem instance} of that formula.
\hfill$\lozenge\!\!\!\!\!\lozenge$\end{definition}
Summing up, to get an Skolem instance of a given formula $\varphi$ we first negation normalize and then rectify it to get a formula $\varphi^{\rm RNNF}$; then we remove the quantifiers of $(\varphi^{\rm RNNF})^S$ to get $(\varphi^{\rm RNNF})^{\rm Sk}$, and substituting its free variables with some ground terms, gives us an Skolem instance of the formula $\varphi$. Let us note that the Skolem instances of a formula are determined uniquely.
\begin{theorem}[Herbrand-Skolem-G\"odel]{
Any theory $T$ is
equi-consistent with its Skolemized theory. In other
words, the theory $T$ is consistent if and only if every finite set of Skolem
instances of $T$ is (propositionally) satisfiable.\hfill\ding{113}
}\end{theorem}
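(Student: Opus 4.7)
The plan is to prove the two directions of the biconditional separately, passing through the Skolemized theory $T^{\rm Sk}=\{\varphi^{\rm Sk}:\varphi\in T\}$ and using propositional compactness for the reduction to finite sets.

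For the forward direction (consistency of $T$ implies propositional satisfiability of every finite set of Skolem instances), I would start from a model $\mathcal{M}\models T$ and use the axiom of choice to expand $\mathcal{M}$ to a structure $\mathcal{M}^{*}$ in the enriched Skolem language by interpreting each new symbol $\textswab{f}_{\exists x A(x)}$ as a choice function that returns a witness for $A(x,\overline{y})$ whenever one exists (arbitrary otherwise). An induction on the inductive clauses defining $\varphi^{S}$ would show that $\mathcal{M}\models\varphi$ implies $\mathcal{M}^{*}\models\forall\overline{z}\,\varphi^{S}(\overline{z})$, so in particular every ground Skolem instance of $\varphi$ is true in $\mathcal{M}^{*}$. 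Any set of ground formulas true in a structure is propositionally satisfiable, since the evaluation sending each ground atom $p$ to its truth value in $\mathcal{M}^{*}$ does the job.

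For the converse, propositional compactness first upgrades the hypothesis to the existence of a single truth assignment $v$ on all ground atoms of the Skolem-expanded language that simultaneously satisfies the whole set $\Sigma$ of Skolem instances of $T$. From $v$ I would build a Herbrand-style structure $\mathcal{H}$ whose universe is the set of closed terms (adjoining a dummy constant if the language has none), quotiented by the congruence $s\sim t\iff v(s\!=\!t)=1$; the Skolem instances of the equality axioms of $T$ force $\sim$ to be a congruence under all function and relation symbols, which are then interpreted in $\mathcal{H}$ via their term action and via $v$. The final step is to verify $\mathcal{H}\models T$ by induction on formulas: universal quantifiers of $\varphi^{S}$ range correctly because every element of $\mathcal{H}$ is named by a closed term, and each Skolem term $\textswab{f}_{\exists x B(x)}(\overline{t})$ furnishes a genuine witness for $\exists x\,B(x,\overline{t})$ in $\mathcal{H}$ thanks to the matching Skolem instance in $\Sigma$.

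The chief technical obstacle is the final induction showing $\mathcal{H}\models\varphi$ from the fact that $v$ satisfies the Skolem instances of $\varphi$: one must carefully align the inductive hypothesis so that universal quantifiers range over the (rich) Herbrand universe and so that the replacement of each $\exists x\psi$ by its Skolem term yields an actual witness. Verifying the equality congruence, while routine, is the other delicate point and hinges on having included in $T$ (hence in its Skolem instances) the full axioms of equality for the original language.
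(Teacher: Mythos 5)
The paper states this as the classical Herbrand--Skolem--G\"odel theorem and gives no proof (the box is just an end-of-statement marker), so there is no ``paper's approach'' to compare against; your task is to be judged on its own. Your overall architecture is the canonical one --- expand a model of $T$ by choice functions for the Skolem symbols in one direction, and use propositional compactness plus a term-model construction in the other --- and this is correct at the level of strategy.

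There is, however, one genuine gap in the converse direction that your own caveat does not quite close. You propose to quotient the set of closed \emph{Skolem} terms by the relation $s\sim t\iff v(s{=}t)=1$ and to interpret every function symbol, including the new Skolem symbols $\textswab{f}_{\exists xA}$, by its term action on the quotient. For this to be well defined you need $\sim$ to be a congruence with respect to \emph{every} function symbol of the Skolem-expanded language. You say this ``hinges on having included in $T$ \ldots the full axioms of equality for the original language,'' but that is exactly what is \emph{not} enough: $T$ is a theory in the original language $\mathcal{L}$, so its equality axioms, and hence their Skolem instances, only assert congruence for the $\mathcal{L}$-symbols, never for the $\textswab{f}_{\exists xA}$'s. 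Nothing in the Skolem instances of $T$ forces $v(s{=}t)=1\ \Rightarrow\ v(\textswab{f}(s){=}\textswab{f}(t))=1$, so the quotient interpretation of $\textswab{f}$ is a priori ill defined. Two standard repairs: (i) do not quotient in the Skolem language at all; first take the reduct of your unquotiented Herbrand structure to $\mathcal{L}$, where $=^{\mathcal{H}}$ \emph{is} a congruence thanks to the $\mathcal{L}$-equality axioms in $T$, and only then pass to the quotient, which is a normal model of $T$; or (ii) change the notion of propositional satisfiability itself so that congruence for \emph{all} ground terms is built in --- which is exactly what the paper does via Definition~\ref{def-eval}, and which the paper then exploits in Definition~\ref{defhmodel} and Lemma~\ref{lem-hmodel} to make the quotient $\mathfrak{M}(\Lambda,p)$ automatically well defined. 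Your proof sketch silently assumes option (ii)'s conclusion while invoking option (i)'s weaker hypothesis; pick one explicitly. The forward direction is fine: the valuation induced by $\mathcal{M}^{*}$ does respect equality as a congruence on all Skolem terms (since $=^{\mathcal{M}^{*}}$ is genuine equality), so it is in particular an evaluation in the sense of Definition~\ref{def-eval}, not merely a truth assignment.
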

\begin{example}\label{q1example}
In the language of arithmetic  $\mathcal{L}_A=\{\textsf{0},\textsf{S},\textsf{+},\cdot,\leqslant\}$, let ${\rm Ind}_\square$ be the instance of induction principle
$\psi(\textsf{0})\wedge\forall x [\psi(x)\rightarrow\psi(\textsf{S}(x))]\rightarrow\forall x\psi(x)$
for $\psi(x)=\exists y [y\leqslant x\cdot x \wedge y=x\cdot x]$.
This is an axiom of the theory ${\rm I\Delta_0}$.
 Rectified Negation Normal Form $({\rm Ind}_\square)^{\rm RNNF}$ of ${\rm Ind}_\square$ is

 $\forall u [u\not\leqslant\textsf{0}\cdot\textsf{0}\vee u\not=\textsf{0}\cdot\textsf{0}]\ \bigvee\ \exists w\Big[\exists z[z\leqslant w\cdot w\wedge z=w\cdot w]\wedge\forall v[v\not\leqslant \textsf{S}(w)\cdot\textsf{S}(w)\vee v\not=\textsf{S}(w)\cdot\textsf{S}(w)]\Big]\ \bigvee$

 $\forall x\exists y [y\leqslant x\cdot x\wedge y=x\cdot x]$.

\noindent  If  $\textswab{c}$ is the Skolem constant symbol for
$\exists w\big[\exists z[z\leqslant w\cdot w\wedge z=w\cdot w]\wedge\forall v[v\not\leqslant \textsf{S}(w)\cdot\textsf{S}(w)\vee v\not=\textsf{S}(w)\cdot\textsf{S}(w)]\big]$,
 and $\textswab{q}(x)$ is the Skolem function symbol for the formula $\exists z [z\leqslant x\cdot x\wedge z=x\cdot x]$, then $(({\rm Ind}_\square)^{\rm RNNF})^S$  is

 $\forall u [u\not\leqslant\textsf{0}\cdot\textsf{0}\vee u\not=\textsf{0}\cdot\textsf{0}]\ \bigvee\ \big[[\textswab{q}(\textswab{c})\leqslant\textswab{c}\cdot\textswab{c}
  \wedge\textswab{q}(\textswab{c})=\textswab{c}\cdot\textswab{c}]
  \wedge\forall v[v\not\leqslant\textsf{S}(\textswab{c})\cdot\textsf{S}(\textswab{c})\vee v\not=\textsf{S}(\textswab{c})\cdot\textsf{S}(\textswab{c})]\big]\ \bigvee$

 $\forall x [\textswab{q}(x)\leqslant x\cdot x\wedge\textswab{q}(x)=x\cdot x]$.

\noindent Finally, the Skolemized form $({\rm Ind}_\square)^{\rm Sk}$ of $\varphi$ is obtained as:

$[u\not\leqslant\textsf{0}\cdot\textsf{0}\vee u\not=\textsf{0}\cdot\textsf{0}]\ \bigvee\ \big[[\textswab{q}(\textswab{c})\leqslant\textswab{c}\cdot\textswab{c}
  \wedge\textswab{q}(\textswab{c})=\textswab{c}\cdot\textswab{c}]
  \wedge [v\not\leqslant\textsf{S}(\textswab{c})\cdot\textsf{S}(\textswab{c})\vee v\not=\textsf{S}(\textswab{c})\cdot\textsf{S}(\textswab{c})]\big]\ \bigvee$

$[\textswab{q}(x)\leqslant x\cdot x\wedge\textswab{q}(x)=x\cdot x]$.

\noindent Substituting $u/\textsf{0}$, $v/\textsf{S}(\textswab{c})\cdot\textsf{S}(\textswab{c})$, $x/t$ will result in the following Skolem instance of $\varphi$:

$[\textsf{0}\not\leqslant\textsf{0}\cdot\textsf{0}\vee \textsf{0}\not=\textsf{0}\cdot\textsf{0}] \ \bigvee\ \big[[\textswab{q}(\textswab{c})\leqslant\textswab{c}\cdot\textswab{c}
  \wedge\textswab{q}(\textswab{c})=\textswab{c}\cdot\textswab{c}]
  \wedge [\textsf{S}(\textswab{c})\cdot\textsf{S}(\textswab{c})\not\leqslant\textsf{S}(\textswab{c})\cdot\textsf{S}(\textswab{c})
  \vee \textsf{S}(\textswab{c})\cdot\textsf{S}(\textswab{c})\not=\textsf{S}(\textswab{c})\cdot\textsf{S}(\textswab{c})]\big]\ \bigvee$

  $[\textswab{q}(t)\leqslant t\cdot t\wedge\textswab{q}(t)=t\cdot t]$.
\hfill$\lozenge\!\!\!\!\!\lozenge$\end{example}
 Propositional satisfiability is usually arithmetized from the usual provability, only in propositional logic (see e.g. \cite{HP98}); but in a series of more recent papers, this notion have been arithmetized differently, according to ones needs (\cite{Ada96,Ada01,Ada02,Kol06,Sal01,Sal02,Sal11a,Sal11b,Wil02}).
We formalize the notion of propositional satisfiability by means of evaluations (as in the op. cit. papers) on sets of (Skolem) ground terms, but in a more effective way. To get a small evaluation on a given set of terms, we first sort its members, and then require the equality relation to be  a congruence.

We will call the ground terms constructed from Skolem function (and constant) symbols, simply {\em terms}. For a set $A$, its cardinality will be denoted by $|A|$, and for a sequence $s$, its length will be also denoted by $|s|$. The $(i+1)$th member of $s$ is denoted by $(s)_i$ for any $i<|s|$; so $s=\langle(s)_0,(s)_1,\ldots(s)_{|s|-1}\rangle$. Let $\thickapprox$ and $\prec$ be two new symbols, not in the language of arithmetic $\mathcal{L}_A=\langle \textsf{0},\textsf{S},\textsf{+},\cdot,\leqslant\rangle$.

\begin{definition}[Pre-Evaluation]
For a set of terms $\Lambda$ (with $|\Lambda|\geqslant 2$), a pre-evaluation on $\Lambda$ is a sequence $p$ that satisfies the following conditions:

\noindent (1) length of $p$ is  $|p|=2|\Lambda|-1$;

\noindent (2) for any $0\leqslant i\leqslant|\Lambda|-1$ we have $(p)_{2i}\in\Lambda$;

\noindent (3) for any $1\leqslant i\leqslant|\Lambda|-1$ we have $(p)_{2i-1}\in\{\prec,\thickapprox\}$;

\noindent (4) for any term $t\in\Lambda$ there exists a unique $0\leqslant j\leqslant|\Lambda|-1$ such that $(p)_{2j}=t$.
\hfill$\lozenge\!\!\!\!\!\lozenge$
\end{definition}
In other words, a pre-evaluation on $\Lambda$ sorts (organizes) the terms in $\Lambda$, starting from the smallest and ending in the largest.
\begin{example}\label{example1}
A pre-evaluation on $\{\alpha_0,\alpha_1,\alpha_2,\alpha_3,\alpha_4,\alpha_5,\alpha_6\}$ is a sequence like

{$p=\langle \alpha_4,\prec,\alpha_7,\thickapprox,\alpha_1,\thickapprox,
\alpha_5,\prec,\alpha_3,\prec\alpha_6,\thickapprox,\alpha_2\rangle$.}
\hfill$\lozenge\!\!\!\!\!\lozenge$
\end{example}
\begin{definition}[Equality and Order in Pre-Evaluations]
In a pre-evaluation $p$ on $\Lambda$ define the relations $\thickapprox_p$ and $\prec_p$ on $\Lambda^2$ by the following conditions for $s,t\in\Lambda$:

\noindent (1) $s\thickapprox_p t$ if there exists a sub-sequence $q$ of $p$ of length $2l-1$ ($l\geqslant 1$) such that

(a) either ($(q)_0=s \,\&\, (q)_{2l-2}=t$) or ($(q)_0=t \,\&\, (q)_{2l-2}=s$);

(b) for any $1\leqslant i\leqslant l-1$, $(q)_{2i-1}= \ \thickapprox$.

\noindent (2) $s\prec_p t$ if there exists a sub-sequence $q$ of $p$ of length $2l-1$ ($l\geqslant 1$) such that

(a) $(q)_0=s$ and $(q)_{2l-2}=t$;

(b) there exists some $1\leqslant i\leqslant l-1$ for which $(q)_{2i-1}= \ \prec$.
\hfill$\lozenge\!\!\!\!\!\lozenge$
\end{definition}
{\em Example~\ref{example1} (Continued)} We have $\alpha_1\thickapprox_p\alpha_5\thickapprox_p\alpha_7$ and $\alpha_2\thickapprox_p\alpha_6$. Also, $\alpha_4\prec_p\alpha_1$, $\alpha_4\prec_p\alpha_5$, $\alpha_4\prec_p\alpha_7$, $\alpha_1\prec_p\alpha_2$,  $\alpha_1\prec_p\alpha_3$, and $\alpha_1\prec_p\alpha_6$ hold.\hfill$\lozenge\!\!\!\!\!\lozenge$
\begin{lemma}[Equivalence and Order by Pre-Evaluation]\label{equiorder} Let $\Lambda$ be a set of terms, and $p$ be a pre-evaluation on $\Lambda$.

\noindent (1) The relation $\thickapprox_p$ is an equivalence on $\Lambda$.

\noindent (2)  The relation $\prec_p$ is a total order on $\Lambda$.

\noindent (3) The relations $\thickapprox_p$ and $\prec_p$ are compatible with each other: if $t\thickapprox_p s$, and $t\prec_p u$ (respectively, $u\prec_p t$), then $s\prec_p u$ (respectively, $u\prec_p s$).
\end{lemma}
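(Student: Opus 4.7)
The plan is to reduce everything to a single \emph{level function} $c\colon\Lambda\to\mathbb{Z}_{\geqslant 0}$ read off from $p$. For each $t\in\Lambda$ let $\pi(t)$ denote the unique index $2i$ with $(p)_{\pi(t)}=t$ (this exists by clause~(4) of the definition of pre-evaluation), and set
$c(t)=\bigl|\{j : 0\leqslant j<\pi(t)/2,\; (p)_{2j+1}=\prec\}\bigr|,$
i.e.\ the number of $\prec$-symbols of $p$ occurring strictly to the left of $t$. The technical core of the proof is the characterisation
$s\thickapprox_p t \Longleftrightarrow c(s)=c(t)$ and $s\prec_p t \Longleftrightarrow c(s)<c(t),$
which I would establish first; all three parts of the lemma then follow in one line from the corresponding properties of integer equality and ordering.

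To prove the characterisation I read the sub-sequence $q$ in the definitions as a contiguous substring of $p$: this interpretation is forced by Example~\ref{example1} (under the liberal reading one would get $\alpha_4\thickapprox_p\alpha_1$ and $\alpha_4\prec_p\alpha_1$ simultaneously, which would contradict the compatibility clause (3) of the present lemma, since $\alpha_1\prec_p\alpha_1$ is impossible by strictness of position indices), and once this is granted the endpoints of $q$ pin its span in $p$ uniquely, again by clause~(4). With this convention a $\thickapprox$-witness between $s$ and $t$ exists iff the contiguous segment of $p$ from $\pi(s)$ to $\pi(t)$ contains no $\prec$ at an odd position, which is exactly $c(s)=c(t)$; and a $\prec$-witness from $s$ to $t$ (which enforces $\pi(s)<\pi(t)$ through $(q)_0=s$, $(q)_{2l-2}=t$) exists iff that same segment contains at least one $\prec$, which is exactly $c(s)<c(t)$ (the inequality $\pi(s)<\pi(t)$ being automatic since $c$ is monotone in position).

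Given the characterisation, the three parts of the lemma are immediate. Part (1) holds because $=$ on $\mathbb{Z}$ is reflexive, symmetric and transitive. Part (2) holds because for distinct $s,t$ one has trichotomy $c(s)<c(t)$, $c(s)=c(t)$ or $c(t)<c(s)$, with the middle case being exactly $s\thickapprox_p t$; hence $\prec_p$ is irreflexive, transitive, and comparability-complete, i.e.\ a strict total order on $\Lambda$ modulo $\thickapprox_p$ (matching the reading suggested by Example~\ref{example1}). Part (3) is the implication $c(t)=c(s)\wedge c(t)<c(u)\Rightarrow c(s)<c(u)$, together with its symmetric variant for $u\prec_p t$.

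The only bookkeeping subtlety is the contiguous reading of ``sub-sequence'' and the associated gluing step: in the converse direction of the characterisation, and implicitly in the transitivity of $\thickapprox_p$ and $\prec_p$, two witnesses that share a common endpoint glue into one contiguous segment of $p$ at the unique position of that endpoint, because each term occupies exactly one slot of $p$ by clause~(4). Once this observation is made the rest of the argument is purely elementary, and I do not expect any further technical obstacle.
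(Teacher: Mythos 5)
Your proof is correct, and it takes a genuinely different route from the paper's. The paper dismisses parts (1) and (2) as immediate and spends its whole argument on part (3), arguing directly on the witnesses: given a $\thickapprox$-witness $r$ (between $s$ and $t$) and a $\prec$-witness $q$ (from $t$ to $u$), it splits into the cases ``$r$ runs from $s$ to $t$, concatenate $r$ with $q$'' and ``$r$ runs from $t$ to $s$, so $r$ and $q$ share their left endpoint and one is nested inside the other,'' extracting an $s$-to-$u$ witness with a $\prec$ in each case. Your level function $c(t)$ --- the number of $\prec$-symbols strictly to the left of $t$'s slot in $p$ --- factors all three parts through the single characterisation $s\thickapprox_p t\Leftrightarrow c(s)=c(t)$, $\;s\prec_p t\Leftrightarrow c(s)<c(t)$, after which (1) is reflexivity/symmetry/transitivity of $=$, (2) is trichotomy of $<$, and (3) is substitution of $=$ into $<$ on $\mathbb{Z}$. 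This is tidier and more mechanical at the cost of one up-front lemma (the characterisation). Both proofs depend on reading ``sub-sequence'' as a contiguous segment and on the uniqueness of each term's slot from clause (4) --- the paper's ``concatenation'' and nesting steps need exactly this --- but you make these dependencies explicit where the paper leaves them tacit. Your two side observations are also worth noting: the liberal (non-contiguous) reading would falsify the lemma on the worked example, and ``$\prec_p$ is a total order'' must in fact be read as a strict total order on $\Lambda$ modulo $\thickapprox_p$; both are accurate clarifications of what the paper actually means.
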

\begin{proof}
 The parts (1) and (2) are immediate. For (3), suppose $t\thickapprox_p s$ and $t\prec_p u$. Then there is a sub-sequence $q$ of $p$ which starts from $t$ and ends with $u$ and contains at least one special symbol $\prec$. There must also be some other sub-sequence $r$ which starts from either $t$ or $s$ and ends with the other one, and all its special symbols are equality $\thickapprox$. If $r$ starts from $s$ (and so ends with $t$), then the concatenation of $r$ and $q$ results in a sub-sequence which starts from $s$ and ends with $u$ and contains some special symbol $\prec$. Whence $s\prec_p u$. And if $r$ starts from $t$, then $q$ cannot be a sub-sequence of $r$ because all the special symbols in $r$ are $\thickapprox$ and $q$ contains at least  one special symbol $\prec$. Thus $r$ has to be a sub-sequence of $q$. Then there must exist a sub-sequence of $p$ which starts from $s$ and ends with $u$ and contains a special symbol $\prec$; whence $s\prec_p u$. The other case ($u\prec_p t$) can be proved very similarly.
 \end{proof}
\begin{definition}[Evaluation]\label{def-eval}
 A pre-evaluation $p$ on a set of terms $\Lambda$ is called an evaluation when, for any term $t,s\in\Lambda$ and any term $u(x)$ with the free variable $x$, if $t\thickapprox_p s$ and $u(t/x),u(s/x)\in\Lambda$ hold, then $u(t/x)\thickapprox_p u(s/x)$ holds too.  \hfill$\lozenge\!\!\!\!\!\lozenge$
\end{definition}
In other words, an evaluation on $\Lambda$ is a pre-evaluation $p$ on $\Lambda$ whose equivalence relation $\thickapprox_p$ is a congruence relation on $\Lambda$.
\begin{definition}[Satisfaction in an Evaluation] Let $\Lambda$ be a set of terms and $p$ an evaluation on it. For terms $t,s\in\Lambda$ we write $p\models t=s$ when $t\thickapprox_p s$ holds. We also write $p\models t\leqslant s$ when either $t\thickapprox_p s$ or $t\prec_p s$ holds.  So, for atomic formulas $\varphi$ in the language of arithmetic $\mathcal{L}_A$ we have defined the notion of satisfaction $p\models\varphi$. The satisfaction relations can be extended to all  open (quantifier-less) formulas as:

$\bullet$\ $p\models \varphi\wedge\psi \iff p\models\varphi \textrm{ and } p\models\psi$

$\bullet$\ $p\models \varphi\vee\psi \iff p\models\varphi \textrm{ or } p\models\psi$

$\bullet$\ $p\models \varphi\rightarrow\psi \iff \textrm{ if } p\models\varphi \textrm{ then } p\models\psi$

$\bullet$\ $p\models \neg\varphi \iff  p\not\models\varphi$\hfill$\lozenge\!\!\!\!\!\lozenge$
\end{definition}
\begin{lemma}[Leibniz's Law]
Any evaluation $p$ on any set of terms $\Lambda$ satisfies all the available Skolem instances of the axioms of equational logic, in particular Leibniz's Law: for any $t,s\in\Lambda$ and any open formula $\varphi(x)$, we have $p\models t=s\wedge\varphi(t)\rightarrow\varphi(s)$.
\end{lemma}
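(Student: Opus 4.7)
The plan is to dispose of the equational axioms (reflexivity, symmetry, and transitivity of $=$, together with the congruence rule for function symbols) as immediate consequences of Lemma~\ref{equiorder}(1) and the defining property of an evaluation in Definition~\ref{def-eval}, and then to devote the main effort to Leibniz's Law, which I would prove by structural induction on the open formula $\varphi(x)$.

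First I would pass to the rectified negation normal form of $\varphi$ so that $\neg$ only appears in front of atomic formulas; the inductive steps for $\wedge$ and $\vee$ are then routine from the clauses of the satisfaction relation, and the real work concentrates on the atomic and negated-atomic base cases. For the atomic equality case, suppose $\varphi(x)$ is $u_1(x)=u_2(x)$. Since both $\varphi(t)$ and $\varphi(s)$ are assumed available for evaluation by $p$, all four terms $u_1(t),u_2(t),u_1(s),u_2(s)$ lie in $\Lambda$. Applying the evaluation condition of Definition~\ref{def-eval} to the term contexts $u_1(x)$ and $u_2(x)$ separately yields $u_1(t)\thickapprox_p u_1(s)$ and $u_2(t)\thickapprox_p u_2(s)$; combined with the hypothesis $u_1(t)\thickapprox_p u_2(t)$ and the equivalence-relation properties of $\thickapprox_p$ from Lemma~\ref{equiorder}(1), this gives $u_1(s)\thickapprox_p u_2(s)$, i.e.\ $p\models\varphi(s)$.

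If $\varphi(x)$ is instead $u_1(x)\leqslant u_2(x)$ and the $\thickapprox_p$ case fails, I would invoke the compatibility clause Lemma~\ref{equiorder}(3) twice, first transporting $u_1(t)\prec_p u_2(t)$ along $u_1(t)\thickapprox_p u_1(s)$ and then along $u_2(t)\thickapprox_p u_2(s)$, to conclude $u_1(s)\prec_p u_2(s)$. The negated-atomic cases then follow by symmetry: since $\thickapprox_p$ is symmetric, one reruns the atomic argument with the roles of $s$ and $t$ interchanged, so $p\models\neg\varphi(t)$ yields $p\models\neg\varphi(s)$.

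The only delicate point, and what I expect to be the main (though mild) obstacle, is the bookkeeping of availability: each intermediate term and atomic subformula appearing in the argument must belong to $\Lambda$ before the evaluation condition or the satisfaction relation is invoked. This is automatic for Leibniz's Law as stated, since the atomic subformulas of $\varphi(t)$ and $\varphi(s)$ already force the required memberships at every step, and moreover no separate recursion on the term structure of the $u_i(x)$ is necessary, because Definition~\ref{def-eval} supplies congruence in arbitrary one-variable term contexts in a single application.
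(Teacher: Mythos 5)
Your argument is correct and follows essentially the same line as the paper's own proof: induction on the complexity of the open formula $\varphi$, with the propositional connectives handled by the clauses of the satisfaction relation and the atomic base cases settled by the congruence condition of Definition~\ref{def-eval} together with Lemma~\ref{equiorder}. The paper's proof is considerably terser (it cites only Lemma~\ref{equiorder} for the atomic case, leaving the appeal to the congruence property of evaluations implicit), and your preliminary pass to rectified negation normal form is unnecessary since the satisfaction relation already treats $\neg$ and $\rightarrow$ directly on open formulas --- but these are differences of presentation, not of substance.
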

\begin{proof}
Suppose $p\models t=s$. By induction on (the complexity) of (the open formula) $\varphi$ one can show that $p\models\varphi(t)$ if and only if $p\models\varphi(s)$. For atomic $\varphi$ it follows from Lemma~\ref{equiorder}, and for the more complex formulas it follows from the inductive definition of satisfaction in evaluations.
\end{proof}
\begin{definition}[$T-$evaluation on $\Lambda$]
For a set of terms $\Lambda$, an Skolem instance of a formula is called to be available in $\Lambda$ if all the terms appearing in it belong to $\Lambda$.
For a theory $T$ and a set of terms $\Lambda$ and an evaluation $p$ on $\Lambda$, we say that $p$ is an $T-$evaluation on $\Lambda$ if $p$ satisfies every Skolem instance of every sentence in $T$ which is available in $\Lambda$.
\hfill$\lozenge\!\!\!\!\!\lozenge$\end{definition}
So, $T-$evaluations, for a theory $T$, are kind of partial models of $T$.   Indeed,  if $\Lambda$ is the set of all (ground) terms (constructed from the language of $T$ and the Skolem function symbols of the axioms of $T$), then any $T-$evaluaton on $\Gamma$ (if exists) is a {\em Herbrand Model} of $T$. Herbrand's Theorem can be read as
``{\em A theory $T$ is consistent if and only if  for every finite set of (Skolem) terms, there exists an $T-$evaluation on it.}"
 Thus, the notion of {\em Herbrand Consistency} of a theory $T$ is (equivalent to) the existence of an $T-$evaluation on any (finite) set of terms.
\begin{example}\label{q2example}
Let $T$ be axiomatized by the following sentences in  $\mathcal{L}_A$:

$\bullet$\ $\forall x [x\cdot\textsf{0}=\textsf{0}]$;

$\bullet$\ $\exists y\leqslant\textsf{0}\cdot\textsf{0}[y=\textsf{0}\cdot\textsf{0}]\wedge
\forall x\big[\exists y\leqslant x\cdot x[y=x\cdot x]\rightarrow\exists y\leqslant\textsf{S}(x)\cdot\textsf{S}(x)[y=\textsf{S}(x)\cdot\textsf{S}(x)]\big]\rightarrow\forall x\exists y\leqslant x\cdot x[y=x\cdot x]$.

\noindent Let $\Lambda=\{\textsf{0},\textsf{0}\cdot\textsf{0},\textswab{c},
\textswab{c}\cdot\textswab{c},\textswab{q}(\textswab{c}),
\textsf{S}(\textswab{c})\cdot\textsf{S}(\textswab{c}),t,t\cdot t,\textswab{q}(t)\}$ where $\textswab{c}$ and $\textswab{q}$ are as in Example~\ref{q1example}. As we saw in that example, the following is an instance of the the second axiom (${\rm Ind}_\square$), which is also available in $\Lambda$:

$[\textsf{0}\not\leqslant\textsf{0}\cdot\textsf{0}\vee \textsf{0}\not=\textsf{0}\cdot\textsf{0}]\ \bigvee$

$\big[[\textswab{q}(\textswab{c})\leqslant\textswab{c}\cdot\textswab{c}
  \wedge\textswab{q}(\textswab{c})=\textswab{c}\cdot\textswab{c}]
  \wedge [\textsf{S}(\textswab{c})\cdot\textsf{S}(\textswab{c})\not\leqslant\textsf{S}(\textswab{c})\cdot\textsf{S}(\textswab{c})
  \vee \textsf{S}(\textswab{c})\cdot\textsf{S}(\textswab{c})\not=\textsf{S}(\textswab{c})\cdot\textsf{S}(\textswab{c})]\big]\ \bigvee$

$[\textswab{q}(t)\leqslant t\cdot t\wedge\textswab{q}(t)=t\cdot t]$.

\noindent Suppose $p$ is an $T-$evaluation on $\Lambda$. By the first axiom $p$ must satisfy the instance $\textsf{0}\cdot\textsf{0}=\textsf{0}$, so we should have $p\models \textsf{0}\cdot\textsf{0}=\textsf{0}$. Thus, $p$ cannot satisfy the first disjunct of the above instance. Indeed, $p$ cannot satisfy the second disjunct either, because for any term $u$ we have $p\models u\leqslant u\wedge u=u$.  Thus, $p$ cannot satisfy the second conjunct of the second disjunct. Whence, $p$ must satisfy the third disjunct of the above instance, and in particular we should have $p\models\textswab{q}(t)=t\cdot t$.
\hfill$\lozenge\!\!\!\!\!\lozenge$\end{example}
\begin{definition}[Skolem Hull]\label{defshull}
Let $\mathcal{L}_A^{\rm Sk}$ be the language expanding $\mathcal{L}_A$ by the Skolem function (and constant) symbols of all the existential formulas in the language $\mathcal{L}_A$. Or in other words,  $\mathcal{L}_A^{\rm Sk}$ is the set $\mathcal{L}_A^{\rm Sk}=\{\textswab{f}_{\exists x\varphi(x)}\mid\varphi\textrm{ is an }\mathcal{L}_A-\textrm{formula}\}.$
\noindent For a given set of terms $\Lambda$,
let $\Lambda^{\langle j\rangle}$ be defined by induction on $j$:

\noindent $\Lambda^{\langle 0\rangle}=\Lambda$, and

\noindent $\Lambda^{\langle j+1\rangle}=\Lambda^{\langle j\rangle}\cup\{f(t_1,\ldots,t_m)\mid f\!\in\!\mathcal{L}\wedge t_1,\ldots,t_m\!\in\!\Lambda^{\langle j\rangle}\}\cup\{\textswab{f}_{\exists x\varphi(x)}(t_1,\ldots,t_m)\mid \ulcorner\varphi\urcorner\leqslant j\wedge t_1,\ldots,t_m\!\in\!\Lambda^{\langle j\rangle}\}$,

\noindent where $\ulcorner\varphi\urcorner$ is the G\"odel code of $\varphi$.
\hfill$\lozenge\!\!\!\!\!\lozenge$\end{definition}
Bounding the G\"odel code of $\varphi$ in the above definition will enable us to have some efficient (upper bound) for the G\"odel code of $\Lambda^{\langle j\rangle}$  (see~\cite{Sal11a,Sal11b}).

Herbrand's theorem implies that for any $\exists_1-$formula $\exists x\psi(x)$ (where $\psi$ is an open formula) and any theory $T$, if  $T\vdash\exists x\psi(x)$ then there are some (Skolem) terms $t_1,\ldots,t_n$ such that $T^{\rm Sk}\vdash\psi(t_1)\vee\ldots\vee\psi(t_n)$. Usually this observation is called Herbrand's Theorem. We will need a somehow dual of this fact.
\begin{lemma}[Herbrand Proof of Universal Formulas]\label{lem-univ} For a $\forall_1-$formula $\forall x\psi(x)$ (where $\psi$ is open) and a theory $T$, suppose $T\vdash\forall x\psi(x)$. Let $\Lambda$ be a set of terms and $t\in\Lambda$. There exists a finite (standard) $k\geqslant 0$ such that for any $T-$evaluation $p$ on $\Lambda^{\langle k\rangle}$ we have $p\models\psi(t)$.
\end{lemma}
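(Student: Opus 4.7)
The plan is to reason by contradiction, combining the Herbrand--Skolem--G\"odel theorem with the Skolem-hull construction of Definition~\ref{defshull}. Since $T\vdash\forall x\,\psi(x)$ and $t$ is a ground (Skolem) term, we have $T\vdash\psi(t)$, so $T\cup\{\neg\psi(t)\}$ is inconsistent. Because $\psi$ is open, $\neg\psi(t)$ is already a quantifier-free ground formula, hence its own (unique) Skolem instance. The Herbrand--Skolem--G\"odel theorem therefore yields a \emph{finite} list $\sigma_1,\ldots,\sigma_n$ of Skolem instances of axioms of $T$ such that
\[
\sigma_1\wedge\cdots\wedge\sigma_n\wedge\neg\psi(t)
\]
is propositionally unsatisfiable.

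Next I would choose a standard $k$ large enough that every term occurring in $\sigma_1,\ldots,\sigma_n,\psi(t)$ belongs to $\Lambda^{\langle k\rangle}$. Let $\Delta$ denote that (finite) set of terms. Each member of $\Delta$ is built from symbols of $\mathcal{L}_A$ together with finitely many Skolem function symbols $\textswab{f}_{\exists x\varphi_1(x)},\ldots,\textswab{f}_{\exists x\varphi_r(x)}$. Put $N=\max_i\ulcorner\varphi_i\urcorner$ and let $d$ be the maximum nesting depth of function applications appearing in $\Delta$. Starting from $\Lambda^{\langle 0\rangle}=\Lambda\ni t$, the inductive clauses of Definition~\ref{defshull} introduce all $\mathcal{L}_A$-constants by stage $1$, unlock every required Skolem symbol from stage $N+1$ onwards, and thereafter permit one further layer of nesting per iteration; so $k:=N+d+1$ (or any larger standard integer) suffices to guarantee $\Delta\subseteq\Lambda^{\langle k\rangle}$.

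Finally, let $p$ be any $T$-evaluation on $\Lambda^{\langle k\rangle}$. By the defining property of a $T$-evaluation, $p\models\sigma_i$ for every $i$, because each $\sigma_i$ is a Skolem instance of an axiom of $T$ available in $\Lambda^{\langle k\rangle}$. Suppose towards a contradiction that $p\not\models\psi(t)$. Reading off $p$ on the atomic formulas whose arguments lie in $\Lambda^{\langle k\rangle}$ (declare $u_1=u_2$ true iff $u_1\thickapprox_p u_2$, and $u_1\leqslant u_2$ true iff $u_1\thickapprox_p u_2$ or $u_1\prec_p u_2$) produces, via the inductive clauses of the satisfaction relation, a propositional truth assignment that simultaneously satisfies $\sigma_1,\ldots,\sigma_n$ and $\neg\psi(t)$; this contradicts the Herbrand unsatisfiability established in the first paragraph. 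Hence $p\models\psi(t)$, as required. The only genuine subtlety is verifying the existence of a \emph{standard} $k$; this is immediate from the finiteness of the Herbrand witness, although producing a uniform, effective bound on $k$ in terms of $\ulcorner\psi\urcorner$ (as will be needed in subsequent applications) would require the more careful arithmetization of \cite{Sal11a,Sal11b}.
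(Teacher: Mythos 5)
Your proof is essentially correct and reaches the same conclusion, but it differs from the paper's in one structurally significant respect. You instantiate $t$ into the universal statement at the very beginning (``$T\vdash\psi(t)$, so $T\cup\{\neg\psi(t)\}$ is inconsistent'') and then extract a Herbrand witness from that particular inconsistency. The paper instead introduces a \emph{generic} Skolem constant $\textswab{c}$ for $\exists x\,\neg\psi(x)$, observes that $T^{\rm Sk}\cup\{\neg\psi(\textswab{c})\}$ is Herbrand-inconsistent, extracts a single finite witness set $\Gamma(\textswab{c})$, and only at the last step substitutes $t$ for $\textswab{c}$ to obtain $\Gamma(t)\subseteq\Lambda^{\langle k\rangle}$. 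The payoff of the paper's route is that $k$ is visibly uniform in $t$: one fixed standard $k$ works for every $t\in\Lambda$, because the shape of the witness is fixed once and for all before $t$ is brought in. That uniformity is exactly what is exploited in the End-Extension Property (Lemma~\ref{endext}), where the present lemma is applied iteratively for terms $t$ and $\underline{j}$ ranging over a nonstandard initial segment inside a nonstandard model, with $p$ a fixed $T$-evaluation on one fixed $\Lambda^{\langle j\rangle}$. Your argument establishes the lemma as literally stated (where $k$ may depend on $t$), but the uniformity remains implicit, as you yourself flag in your closing sentence.

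One further caveat on your explicit bound $k:=N+d+1$: the depth parameter $d$ must be the nesting depth of the witness terms \emph{relative to} $\Lambda$ (equivalently, relative to the substituted term $t$), not the absolute nesting depth. Since $t\in\Lambda=\Lambda^{\langle 0\rangle}$ by hypothesis, a highly nested $t$ costs nothing, and what matters is only how many further function or Skolem-function applications the Herbrand witness stacks on top of $t$ and the language constants; your phrasing ``maximum nesting depth of function applications appearing in $\Delta$'' reads as the absolute depth and would overcount. Working with $\textswab{c}$ first, as the paper does, sidesteps this ambiguity automatically, because $\textswab{c}$ is a constant and the depth of $\Gamma(\textswab{c})$ is then exactly the relative depth one wants.
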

\begin{proof} By $T\vdash\forall x\psi(x)$ the theory $T^{\rm Sk}\cup\{\neg\psi(\textswab{c})\}$, where $\textswab{c}$ is the Skolem constant symbol for $\exists x\neg\psi(x)$, is inconsistent. Suppose $\varphi$ is the rectified negation normal form of $\neg\psi$. Then, by Herbrand's theorem, there exists some finite set of terms $\Gamma$ such that there can be no $(T^{\rm Sk}\cup\{\varphi(\textswab{c})\})-$evaluation on it. Since $\textswab{c}$ appears in $\Gamma$ we write it as $\Gamma(\textswab{c})$, and by $\Gamma(t)$ we denote the set of terms which result from the terms of $\Gamma(\textswab{c})$ by replacing $\textswab{c}$ with $t$ everywhere. It can be clearly seen that there exists some $k\in\mathbb{N}$ such that $\Gamma(t)\subseteq\Lambda^{\langle k\rangle}$. Whence, there cannot be any $(T^{\rm Sk}\cup\{\varphi(t)\})-$evaluation on $\Lambda^{\langle k\rangle}$. Thus, any $T-$evaluation $p$ on $\Lambda^{\langle k\rangle}$ must satisfy $p\not\models\varphi(t)$ or $p\models\psi(t)$.
\end{proof}
\begin{example}\label{q3example}
Let the theory $T$, in the language of arithmetic $\mathcal{L}_A$, be axiomatized by

\begin{tabular}{ll}
$(1) \,\forall x [\textsf{S}(x)\not=\textsf{0}]$ & $(2)\, \forall x,y [x+\textsf{S}(y)=\textsf{S}(x+y)]$ \\
$(3)\,\forall x \exists z [x\not=\textsf{0}\rightarrow x=\textsf{S}(z)]$ & $(4)\,\forall x,y\exists z [x\leqslant y\rightarrow z+x=y]$
\end{tabular}

\noindent
For the open formula $\psi(x)= (x\leqslant\textsf{0}\rightarrow x=\textsf{0})$ we have $T\vdash\forall x\psi(x)$.

Let $\textswab{p}(x)$ be the Skolem function for the formula $\exists z [x=\textsf{0}\vee x=\textsf{S}(z)]$, and $\textswab{h}(x,y)$ be the Skolem function for the formula $\exists z [x\not\leqslant y\vee z+x=y]$. Then the Skolemized form $T^{\rm Sk}$ of the theory $T$ will be as:

\begin{tabular}{ll}
 $(1')\,\textsf{S}(x)\not=\textsf{0}$ & $(2')\,x+\textsf{S}(y)=\textsf{S}(x+y)$ \\
$(3')\,x=\textsf{0}\vee x=\textsf{S}(\textswab{p}(x))$ & $(4')\,x\not\leqslant y\vee \textswab{h}(x,y)+x=y$
\end{tabular}

\noindent For a fixed term $t$ let $\Gamma_t$ be the following set of terms:
\newline\centerline{$\Gamma_t=\{\textsf{0},t,\textswab{h}(t,\textsf{0}),
\textswab{h}(t,\textsf{0})+t,\textswab{p}(t),\textsf{S}(\textswab{p}(t)),
\textswab{h}(t,\textsf{0})+\textswab{p}(t),\textswab{h}(t,\textsf{0})+\textsf{S}(\textswab{p}(t)),
\textsf{S}\big(\textswab{h}(t,\textsf{0})+\textswab{p}(t)\big)\}$.}
\noindent Now we show that any $T-$evaluation $p$ on $\Gamma_t$ must satisfy $p\models\psi(t)$ or, equivalently, if $p\models t\leqslant\textsf{0}$ then $p\models t=\textsf{0}$. Assume $p\models t\leqslant\textsf{0}$. Then by the fourth axiom we have $p\models\textswab{h}(t,\textsf{0})+t=\textsf{0}$. If $p\models t=\textsf{0}$ does not hold, then $p\models t\not=\textsf{0}$, so by the third axiom we have $p\models t=\textsf{S}(\textswab{p}(t))$. Whence, $p\models\textswab{h}(t,\textsf{0})+\textsf{S}(\textswab{p}(t))=\textsf{0}$. On the other hand, by the second axiom, $p\models\textswab{h}(t,\textsf{0})+\textsf{S}(\textswab{p}(t))=
\textsf{S}\big(\textswab{h}(t,\textsf{0})+\textswab{p}(t)\big)$. So, we infer that $p\models\textsf{S}\big(\textswab{h}(t,\textsf{0})+\textswab{p}(t)\big)=\textsf{0}$,
which is in contradiction with the first axiom. Thus, $p\models t=\textsf{0}$ must hold, which shows that $p\models\psi(t)$.
\hfill$\lozenge\!\!\!\!\!\lozenge$\end{example}
As was mentioned before, for a consistent theory $T$ there must exist some Herbrand Model of $T$.
\begin{definition}[Definable Herbrand Models]\label{defhmodel}
Let $\Lambda$ be a set of terms, and define its Skolem Hull to be  $\Lambda^{\langle\infty\rangle}=\bigcup_{n\in\mathbb{N}}\Lambda^{\langle n\rangle}$ (see Definition~\ref{defshull}). For  an evaluation $p$ on $\Lambda^{\langle\infty\rangle}$, let $\mathfrak{M}(\Lambda,p)=\{t/p\mid t\in\Lambda^{\langle\infty\rangle}\}$, where $t/p$ is the equivalence class of the relation $\thickapprox_p$ containing $t$ (cf. Lemma~\ref{equiorder}).  Put the structure

 (1) $f^{\mathfrak{M}(\Lambda,p)}(t_1/p,\ldots,t_m/p)=f(t_1,\ldots,t_m)/p$,

 (2) $R^{\mathfrak{M}(\Lambda,p)}=\{(t_1/p,\ldots,t_m/p)\mid p\models R(t_1,\ldots,t_m)\}$,

\noindent on $\mathfrak{M}(\Lambda,p)$, for any $m-$ary function symbol $f$ and any $m-$ary relation symbol $R$.
\hfill$\lozenge\!\!\!\!\!\lozenge$\end{definition}
\begin{lemma}[Herbrand Models by Evaluations]\label{lem-hmodel} The  structure on $\mathfrak{M}(\Lambda,p)$ is well-defined, and for a theory $T$, if $p$ is an $T-$evaluation on $\Lambda$ then $\mathfrak{M}(\Lambda,p)\models T$.
\hfill\ding{113}\end{lemma}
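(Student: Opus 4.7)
The plan is to first verify the well-definedness of the interpretations in Definition~\ref{defhmodel}, and then establish $\mathfrak{M}(\Lambda,p)\models T$ by a Tarski-style structural induction in which existential quantifiers are witnessed by their associated Skolem function symbols.

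For well-definedness I would argue as follows. Suppose $t_i\thickapprox_p s_i$ for $i=1,\ldots,m$. Applying the congruence clause of Definition~\ref{def-eval} one argument at a time---to the terms $f(s_1,\ldots,s_{i-1},x,t_{i+1},\ldots,t_m)$---yields $f(t_1,\ldots,t_m)\thickapprox_p f(s_1,\ldots,s_m)$; closure of $\Lambda^{\langle\infty\rangle}$ under $f$ (whether $f\in\mathcal{L}_A$ or a Skolem function) guarantees that all the intermediate terms actually live in the domain of $p$. For relation interpretations, Leibniz's Law (applied repeatedly, once per coordinate) gives $p\models R(t_1,\ldots,t_m)\leftrightarrow R(s_1,\ldots,s_m)$, so membership in $R^{\mathfrak{M}(\Lambda,p)}$ is representative-independent.

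For the satisfaction statement, I would fix $\sigma\in T$, pass to $\sigma^{\rm RNNF}$, and prove by induction on sub-formulas $\varphi(\vec{x})$ the following: \emph{for any $\vec{t}\in\Lambda^{\langle\infty\rangle}$, if $p$ satisfies every Skolem instance of $\varphi(\vec{t})$, then $\mathfrak{M}(\Lambda,p)\models\varphi(\vec{t}/p)$}. The atomic and negated-atomic cases follow from the definition of $p\models$ on $\mathcal{L}_A$-atoms together with well-definedness. The Boolean cases are immediate. For $\forall y\,\psi$, every element of the universe has the form $s/p$ for some $s\in\Lambda^{\langle\infty\rangle}$, and the Skolem instances of $\forall y\,\psi(\vec{t},y)$ are exactly the Skolem instances of $\psi(\vec{t},s)$ as $s$ ranges over $\Lambda^{\langle\infty\rangle}$; the inductive hypothesis for each $s$ gives $\mathfrak{M}(\Lambda,p)\models\psi(\vec{t}/p,s/p)$, and the universal statement follows. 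For $\exists y\,\psi$, the Skolem instances of $\exists y\,\psi(\vec{t},y)$ are precisely those of $\psi(\vec{t},\textswab{f}_{\exists y\psi}(\vec{t}))$, so the inductive hypothesis furnishes the witness $\textswab{f}_{\exists y\psi}(\vec{t})/p$, which lies in $\mathfrak{M}(\Lambda,p)$ because $\Lambda^{\langle\infty\rangle}$ is closed under $\textswab{f}_{\exists y\psi}$.

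Applying the claim to $\sigma$ itself completes the argument: since $p$ is a $T$-evaluation and the Skolem Hull is closed under every function symbol occurring in $\sigma^{\rm Sk}$, every Skolem instance of $\sigma$ is available in $\Lambda^{\langle\infty\rangle}$ and hence is satisfied by $p$, whence $\mathfrak{M}(\Lambda,p)\models\sigma$. The only delicate point is the existential step of the induction, and its correctness rests entirely on the closure condition built into Definition~\ref{defshull}---this is the main obstacle, but the Skolem Hull is tailored precisely so that the Skolem witness belongs to $\Lambda^{\langle\infty\rangle}$ and therefore descends to a genuine element of $\mathfrak{M}(\Lambda,p)$.
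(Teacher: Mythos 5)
The paper does not actually give a proof of this lemma---the end-of-proof box is printed directly after the statement, signalling that the author regards it as routine---so there is no source argument to compare yours against. Your reconstruction is the canonical one and is correct: well-definedness of the function interpretations follows by applying the congruence clause of Definition~\ref{def-eval} one coordinate at a time, with closure of $\Lambda^{\langle\infty\rangle}$ under both $\mathcal{L}_A$-symbols and Skolem symbols keeping every intermediate term in the domain of $p$; well-definedness of the relation interpretations follows from Leibniz's Law applied coordinate by coordinate; and the Tarski-style induction on the rectified negation normal form, with existentials witnessed by the Skolem functions and universals ranging over the Skolem hull, is exactly the standard way of extracting a Herbrand model from an evaluation on $\Lambda^{\langle\infty\rangle}$. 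The one spot where \emph{the Boolean cases are immediate} elides a real step is the disjunction $\psi=\psi_1\vee\psi_2$: the hypothesis that $p$ satisfies every instance $\psi_1^{\rm Sk}(\vec{t},\vec{s}_1)\vee\psi_2^{\rm Sk}(\vec{t},\vec{s}_2)$ does not directly yield that $p$ satisfies every Skolem instance of a single fixed disjunct. You need the small case split: either $p$ satisfies every $\psi_1^{\rm Sk}$-instance, or there is some $\vec{s}_1$ with $p\not\models\psi_1^{\rm Sk}(\vec{t},\vec{s}_1)$, in which case plugging that fixed $\vec{s}_1$ into every disjunctive instance forces $p$ to satisfy every $\psi_2^{\rm Sk}$-instance; either way the inductive hypothesis finishes the surviving disjunct. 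This is routine but not literally immediate. Your identification of the existential step, and of the closure property of the Skolem hull as the crux there, is on target; that is indeed the only genuinely delicate part of the argument.
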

\section{Bounded Arithmetic and Herbrand Consistency}
 By an efficient G\"odel coding (see e.g. Chapter V of \cite{HP98}) we can code sets, sequences (and so the syntactic concepts like Skolem function symbols, Skolem instances, evaluations, etc.) such that the following (\cite{HP98}) hold for any sequences $\alpha,\beta$:
\begin{itemize}
\item $\ulcorner\alpha\ast\beta\urcorner\leqslant 64\cdot(\ulcorner\alpha\urcorner\cdot\ulcorner\beta\urcorner)$, where $\ast$ denotes concatenation;
\item $|\alpha|\leqslant\log(\ulcorner\alpha\urcorner)$.
\end{itemize}
It follows that for any sets $A,B$ we  have $\ulcorner A\cup B\urcorner\leqslant 64\cdot(\ulcorner A\urcorner\cdot\ulcorner B\urcorner)$ and $|A|\leqslant\log(\ulcorner A\urcorner)$.
We write $X\in\mathcal{O}(Y)$ to indicate that $X\leqslant Y\cdot n+n$ for some $n\in\mathbb{N}$; that is $X$ is linearly bounded by $Y$. The above (efficient) coding has the property that for any sequence $U=\langle u_1,\ldots,u_l\rangle$ we have $\log(\ulcorner U\urcorner)\in\mathcal{O}(\sum_{i}\log(\ulcorner u_i\urcorner))$. For any evaluation $p$ on a set of terms $\Lambda$ it can be seen that $\log (\ulcorner p\urcorner)\in\mathcal{O}(\log(\ulcorner\Lambda\urcorner))$.

Let us note that all of the concepts introduced so far can be formalized in the language of arithmetic $\mathcal{L}_A$.  Here we make the observation that,  having an arithmetically definable set of terms $\Lambda$, the sets $\Lambda^{\langle j\rangle}$ are all definable in arithmetic (in terms of $\Lambda$ and $j$), but the set $\Lambda^{\langle\infty\rangle}$ is not definable by an arithmetical formula. We will come to this point later.
The arithmetical theory we are interested here is denoted by ${\rm I\Delta_0}$ which is usually axiomatized by Robinson's arithmetic,
 in the language  $\mathcal{L}_A$, plus the induction axiom for
 bounded formulas (see e.g. \cite{HP98}).

 In this section we prove our main result: the existence of a finite fragment $T~\subseteq~{\rm I\Delta_0}$ whose Herbrand Consistency is not provable in ${\rm I\Delta_0}$. As the exponential function $x~\mapsto~2^x$ is not available (provably total) in ${\rm I\Delta_0}$, then we denote by $\textfrak{log}$ the set of  elements $x$ for which $\exp(x)=2^x$ exists. Let us note that for a model $\mathcal{M}$, the set $\textfrak{log}(\mathcal{M})$ is the logarithm of the elements of $\mathcal{M}$. The set $\textfrak{log}$ is closed under $\textsf{S}$ and $\textsf{+}$, but not under ${\times}$, in ${\rm I\Delta_0}$. We will use the term {\em cut} for any definable and downward closed set (not necessarily closed under $\textsf{S}$) in the arithmetical models. The formula $``y=\exp(x)"$ is expressible in $\mathcal{L}_A$, and ${\rm I\Delta_0}$ can prove some of the basic properties of $\exp$ (cf. \cite{HP98}), though cannot prove its totality: ${\rm I\Delta_0}\not\vdash\forall x\exists y [y=\exp(x)]$. By $\textfrak{log}^2$ we denote the set of elements $x$ for which $\exp^2(x)=2^{2^x}$ exists; the superscripts on top of the functions denote the iteration. Similarly, $\textfrak{log}^n=\{x\mid\exists y[y=\exp^n(x)]\}$, where $\exp^n$ denotes the $n$ time iteration of the exponential function $\exp$.

We use a deep theorem in bounded arithmetic, which happens to be the very last theorem of \cite{HP98}. It reads, in our terminology, as:
\begin{center}
{\em For any $k\geqslant 0$ there exists a bounded formula $\varphi(x)$ such that

${\rm I\Delta_0+\Omega_1}\vdash\forall x\in\textfrak{log}^{k+1}\varphi(x)$, \,but\, ${\rm I\Delta_0+\Omega_1}\not\vdash\forall x\in\textfrak{log}^{k}\varphi(x)$.}
\end{center}

It can be clearly seen that the theorem also holds for ${\rm I\Delta_0}$  instead of ${\rm I\Delta_0+\Omega_1}$, and for any cut $I$ (and its logarithm $\log I=\{x\mid\exists y\in I[y=\exp(x)]\}$) instead of $\textfrak{log}^k$ (and its logarithm $\textfrak{log}^{k+1}$); see also~\cite{Ada02}~and (Theorem~3.6 of)~\cite{Sal11a}.
\begin{theorem}[${\rm \Pi_1}-$Separation of Logarithmic Cuts]\label{th-ilogi}
For any cut $I$ there exists a bounded formula $\varphi(x)$ such that  ${\rm I\Delta_0}\cup\{\exists x\!\in\! I\ \varphi(x)\}$ is consistent, but  ${\rm I\Delta_0}\cup\{\exists x\!\in\!\log I\ \varphi(x)\}$ is not consistent.
\hfill\ding{113}\end{theorem}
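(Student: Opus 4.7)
The plan is to reduce the theorem to the final result of \cite{HP98}, which establishes the corresponding separation for the particular iterated-logarithm cuts over the stronger base theory ${\rm I\Delta_0+\Omega_1}$. Two independent generalizations need to be performed: first, replacing the cuts $\textfrak{log}^k$ and $\textfrak{log}^{k+1}$ by an arbitrary cut $I$ and its logarithm $\log I$; and second, weakening the base theory from ${\rm I\Delta_0+\Omega_1}$ to ${\rm I\Delta_0}$. Both adjustments are already implicit in \cite{Ada02} and in Theorem~3.6 of \cite{Sal11a}, so the task is essentially to explain why each step is legitimate, rather than to redo the construction.

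First I would translate the HP98 statement into the existential form used here. That theorem supplies a bounded formula $\chi(x)$ for which ${\rm I\Delta_0+\Omega_1}$ proves $\forall x\!\in\!\textfrak{log}^{k+1}\,\chi(x)$ but does not prove $\forall x\!\in\!\textfrak{log}^{k}\,\chi(x)$. Setting $\varphi(x):=\neg\chi(x)$, these two facts are literally equivalent to the inconsistency of ${\rm I\Delta_0+\Omega_1}\cup\{\exists x\!\in\!\textfrak{log}^{k+1}\,\varphi(x)\}$ and the consistency of ${\rm I\Delta_0+\Omega_1}\cup\{\exists x\!\in\!\textfrak{log}^{k}\,\varphi(x)\}$. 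This already matches the pattern demanded by the present theorem, only for a specific family of cuts.

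Next I would verify that the HP98 argument is uniform in the cut. Inspecting its proof, the only property of $\textfrak{log}^k$ that is actually used is that it is a $\Delta_0$-definable downward closed set, i.e.\ a cut; the G\"odel-style diagonal formula is constructed relative to the cut. Running the same construction with a defining formula of $I$ in place of that of $\textfrak{log}^k$ produces a bounded $\varphi_I(x)$ that separates $I$ from $\log I$ in the required sense.

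Finally I would address the removal of $\Omega_1$. The only role of $\Omega_1$ in the HP98 proof is to provide totality of the polynomial-length coding functions that appear when arithmetizing provability and truth-in-a-cut. Since both the consistency and the inconsistency assertions concern witnesses that, under $\exp$, live below $I$, the coding we need is already available in ${\rm I\Delta_0}$ when the quantifiers are relativized to $\log I$; alternatively, following \cite{Sal11a}, one replaces $I$ by a slightly shortened definable sub-cut closed under the finitely many smash instances actually consumed by the construction, thereby absorbing $\Omega_1$ into the cut itself. The main obstacle is precisely this uniformity-in-$I$ verification — ensuring that the HP98 construction depends on the cut only through its defining formula and not through any global appeal to $\Omega_1$ — but the work in \cite{Ada02} and \cite{Sal11a} renders this step routine, which is why the statement can legitimately be asserted with the brief justification that follows the last theorem of \cite{HP98}.
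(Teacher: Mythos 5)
Your proposal follows exactly the route the paper takes: the paper itself offers no proof, only the remark preceding the theorem that reduces it to the final theorem of \cite{HP98} (restated in the $\exists x\in I\,/\,\exists x\in\log I$ form you derive by negating $\chi$) together with the parenthetical citations to \cite{Ada02} and Theorem~3.6 of \cite{Sal11a} for the cut-generalization and the removal of $\Omega_1$. You have merely unpacked that one-sentence justification into an explicit three-step sketch, which is consistent with the paper's intent.
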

We will find the desired finite fragment of ${\rm I\Delta_0}$ (whose Herbrand Consistency is not provable in ${\rm I\Delta_0}$) in three steps (the following subsections) before proving the main result (in the last subsection). For doing so, we will show that for sufficiently strong finite fragments of ${\rm I\Delta_0}$, like $T$, if ${\rm I\Delta_0}\vdash{\rm HCon}(T)$ then the consistency of the theory ${\rm I\Delta_0}\cup\{\exists x\!\in\! I\ \theta(x)\}$, for some suitable cut $I$ and a suitable bounded formula $\theta$, implies the consistency of the theory $T\cup\{\exists x\!\in\!\log I\ \theta(x)\}$. As we will see, this contradicts Theorem~\ref{th-ilogi}.
\subsection{The First Finite Fragment}
Assuming the consistency of the theory ${\rm I\Delta_0}\cup\{\exists x\!\in\! I\ \varphi(x), {\rm HCon}(T)\}$, and inconsistency of the theory $T\cup\{\exists x\!\in\!\log I\ \varphi(x)\}$, we can construct a model $\mathfrak{M}$, from a given model $\mathcal{M}\models{\rm I\Delta_0}\cup\{\exists x\!\in\! I\ \varphi(x), {\rm HCon}(T)\}$, such that
$\mathfrak{M}\models T\cup\{\exists x\!\in\!\log I\ \varphi(x)\}$; which is in contradiction with the assumptions. For that, let us take a (hypothetical) model $\mathcal{M}\models{\rm I\Delta_0}\cup\{a\!\in\! I\wedge\varphi(a)\}\cup \{{\rm HCon}(T)\}$ for some $a\in\mathcal{M}$. Then we form the set
$\Gamma=\{\underline{0},\underline{1},\underline{2},\ldots\underline{\omega_1(a)}\}$ where $\underline{i}$ is a term in $\mathcal{L}_A$ representing the number $i$, defined inductively as $\underline{0}=\textsf{0}$ and $\underline{i+1}=\textsf{S}(\underline{i})$. From the assumption $\mathcal{M}\models{\rm HCon}(T)$ we find an $T-$evaluation $p$ on $\Lambda^{\langle j\rangle}$, for a suitable $j$ and a suitable $\Lambda$ which contains the above set $\Gamma$. Then we can form the model $\mathfrak{M}(\Lambda,p)$ and, by some technical details, show that $\mathfrak{M}(\Lambda,p)\models T+\exists x\in\log I\varphi(x)$. The bound $\omega_1(a)$ assures us that the set $\Gamma$ contains the range of (the bounded) quantifiers in the (bounded) formula $\varphi(a)$. For the G\"odel code of $\underline{i}$ we have $\log(\ulcorner\underline{i}\urcorner)\in\mathcal{O}(\log(2^i))$ and so   $\log(\ulcorner\Gamma\urcorner)\in
\mathcal{O}(\log(2^{(\omega_1(a))^2}))$ whence
$\log(\ulcorner\Gamma\urcorner)\in\mathcal{O}\big(\log\big(\exp^2(2(\log a)^2)\big)\big)$. We need the closure of $\Gamma$ under the Skolem function symbols of (a finite fragment of) $\rm{I\Delta_0}$, that is $\Gamma^{\langle\infty\rangle}$ (see Definitions~\ref{defhmodel}~and~\ref{defshull}). Since, unfortunately, that set is not definable, we consider the set $\Gamma^{\langle j\rangle}$ for a non-standard $j$, which makes sense if $\ulcorner\Gamma\urcorner$  (and so $a$) is non-standard. In case $a$ is standard, then the proof becomes trivial (see below). For some non-standard $j$ with $j\leqslant\log^4(\ulcorner\Gamma\urcorner)$ we can form the set $\Gamma^{\langle j\rangle}$,  in case $\omega_2(\ulcorner\Gamma\urcorner)$ exists (see~\cite{Sal11a,Sal11b}). And finally we have $\log\big (\omega_2(\ulcorner\Gamma\urcorner)\big)\in
\mathcal{O}\big(\log\big (\exp^2(4(\log a)^4)\big)\big)$.
\begin{definition}[The Cut ${\mathcal I}$]\label{def-cuti}
The cut $\mathcal{I}$ is defined to be $\{x\mid\exists  y[y=\exp^2(4(\log a)^4)]\}$, and its logarithm is $\log\mathcal{I}=\{x\mid\exists y[y=\exp^2(4a^4)]\}$.
\hfill$\lozenge\!\!\!\!\!\lozenge$\end{definition}
 Applying theorem~\ref{th-ilogi} to the cut $\mathcal{I}$ defined above, we find a (fixed) bounded formula $\theta$ and a finite fragment $T_0\subseteq{\rm I\Delta_0}$ such that the theory the theory ${\rm I\Delta_0}\cup\{\exists x\!\in\!\mathcal{I}\theta(x)\}$ is consistent, but  $T_0\cup\{\exists x\!\in\!\log\mathcal{I}\theta(x)\}$ is not consistent.
\begin{definition}[The First Fragment $T_0$]\label{def-t0}
Let $T_0$ be a finite fragment of ${\rm I\Delta_0}$ for which there exists a (fixed) bounded formula $\theta$ such that the theory ${\rm I\Delta_0}\cup\{\exists x\!\in\!\mathcal{I}\theta(x)\}$ is consistent, but  $T_0\cup\{\exists x\!\in\!\log\mathcal{I}\theta(x)\}$ is not consistent. Let $\mathcal{M}$ be a (fixed) model such that $\mathcal{M}\models{\rm I\Delta_0}\cup\{\exists x\!\in\!\mathcal{I}\theta(x)\}$.
\hfill$\lozenge\!\!\!\!\!\lozenge$\end{definition}
In the rest of the paper we will show that for a finite fragment $T$ of ${\rm I\Delta_0}$ extending $T_0$ we have that $\mathcal{M}\not\models{\rm HCon}(T)$, where ${\rm HCon}$ is the predicate of Herbrand Consistency.
\subsection{The Second Finite Fragment}
The proof of the main result goes roughly as follows: if $\mathcal{M}\models{\rm HCon}(T)$, for a finite fragment $T\subseteq{\rm I\Delta_0}$ to be specified later, then there exists (in $\mathcal{M}$) some $T-$evaluation $p$ on some $\Lambda^{\langle j\rangle}$, where $\Lambda\supseteq\Gamma$ is to be specified later and $\Gamma$ and $j$ are as in the previous subsection. Whence we can form the model $\mathfrak{M}(\Lambda,p)$, for which we already have $\mathfrak{M}(\Lambda,p)\models T$. Our second finite fragment $T_1$ will have the property that if $T\supseteq T_1$ then $\mathfrak{M}(\Lambda,p)\models\theta_0(\underline{a}/p)$.  The third finite fragment $T_2$ will have the property that if $T\supseteq T_2$ then we have $\mathfrak{M}(\Lambda,p)\models\underline{a}/p\!\in\!\log\mathcal{I}$. So, finally we will get the model $\mathfrak{M}(\Lambda,p)$ which satisfies $\mathfrak{M}(\Lambda,p)\models T+[\underline{a}/p\!\in\!\log\mathcal{I}\wedge\theta_0(\underline{a}/p)]$, or, in the other words, $\mathfrak{M}(\Lambda,p)\models T\cup\{\exists x\!\in\!\log\mathcal{I}\theta_0(x)\}$ which is in contradiction with (the choice of the first finite fragment) $T_0\subseteq T$.
\begin{definition}[The Second Fragment $T_1$]\label{def-t1}
Let $T_1$ be a finite fragment of ${\rm I\Delta_0}$ which can prove the following (${\rm I\Delta_0}-$provable $\forall^\ast-$)sentences:

\begin{tabular}{ll}
$\bullet\ \ x+\textsf{0}=x$ &  $\bullet \ \  x+\textsf{S}(y)=\textsf{S}(x+y)$  \\
$\bullet \ \ x\cdot\textsf{0}=\textsf{0}$ &   $\bullet \ \  x\cdot\textsf{S}(y)=x\cdot y+ x$  \\
$\bullet \ \ x\leqslant\textsf{0}\leftrightarrow x=\textsf{0}$
&  $\bullet \ \  x\leqslant\textsf{S}(y)\leftrightarrow x=\textsf{S}(y)\vee
x\leqslant y$  \\
$\bullet \ \ x\leqslant y\vee y\leqslant x$ &  $\bullet \ \  x\leqslant y\leqslant z\rightarrow x\leqslant z$  \\
$\bullet \ \ x\leqslant z+x$ &  $\bullet \ \  x\leqslant x+z$  \\
$\bullet \ \ x+z\leqslant y+z\rightarrow x\leqslant y$ &  $\bullet \ \  z\not=\textsf{0}\wedge x\cdot z\leqslant y\cdot z\rightarrow x\leqslant y$  \\
$\bullet \ \ x\not=y\leftrightarrow\textsf{S}(x)\leqslant y\vee\textsf{S}(y)\leqslant x$ &  $\bullet \ \  x\not\leqslant y\leftrightarrow\textsf{S}(y)\leqslant x$  \\
\end{tabular}

\noindent and also can prove the following (${\rm I\Delta_0}-$provable $\forall^\ast\exists^\ast-$)sentences:

$\bullet $\ $x\leqslant y\rightarrow\exists z[z+x=y]$

$\bullet$\ $y\not=\textsf{0}\rightarrow\exists q,r [x=r+q\cdot y\wedge r\leqslant y]$\hfill $\lozenge\!\!\!\!\!\lozenge$
\end{definition}
\begin{remark}
It can be seen that $T_1$ can prove the following arithmetical sentences:

\begin{tabular}{ll}
$\bullet \ \ \textsf{S}(x)\not=\textsf{0}$ &  $\bullet \ \   \textsf{S}(x)=\textsf{S}(y)\rightarrow x=y$ \\
$\bullet \ \ \textsf{S}(x)\not\leqslant x$ &   $\bullet \ \ x\not=\textsf{0}\rightarrow\exists y[x=\textsf{S}(y)]$  \\
\end{tabular}

\noindent For a proof, first note that by $x\leqslant y\vee y\leqslant x$ we have $\forall u[u\leqslant u]$, and also from $x\leqslant z+x$ and $x+\textsf{0}=x$ we get $\forall u[\textsf{0}\leqslant u]$. Now, if $\textsf{S}(u)=\textsf{0}$, then $\textsf{S}(u)\leqslant\textsf{0}$, and so by the axiom $x\not\leqslant y\leftrightarrow\textsf{S}(y)\leqslant x$ we get $\textsf{0}\not\leqslant u$, contradiction! Also from the same axiom it follows that $u\not\leqslant u\leftrightarrow\textsf{S}(u)\leqslant u$, and thus $\textsf{S}(u)\not\leqslant u$. If $\textsf{S}(u)=\textsf{S}(v)$ and $u\not=v$ then by $x\not=y\leftrightarrow\textsf{S}(x)\leqslant y\vee\textsf{S}(y)\leqslant x$ we have either $\textsf{S}(u)\leqslant v$ or $\textsf{S}(v)\leqslant u$. If $\textsf{S}(u)\leqslant v$ then $\textsf{S}(v)\leqslant v$, contradiction! The other case is similar. Finally, assume $u\not=\textsf{0}$. Then by $x\leqslant\textsf{0}\leftrightarrow x=\textsf{0}$ we have $u\not\leqslant\textsf{0}$ and so the axiom $x\not\leqslant y\leftrightarrow\textsf{S}(y)\leqslant x$ implies that $\textsf{S}(\textsf{0})\leqslant u$. Thus, by $x\leqslant y\rightarrow\exists z[z+x=y]$ we have $v+\textsf{S}(\textsf{0})=u$ for some $v$. Then from $x+\textsf{S}(y)=\textsf{S}(x+y)$ and $x+\textsf{0}=x$ we conclude that $\textsf{S}(v)=u$. \qquad Q.E.D
\hfill $\lozenge\!\!\!\!\!\lozenge$\end{remark}
The main property of $T_1$ is the following:
\begin{theorem}[The Main Property of $T_1$]\label{th-t1}
Suppose $\mathcal{M}\models{\rm I\Delta_0}+[a\!\in\!\mathcal{I}\wedge\theta(a)]+ {\rm HCon}(T)$ is a non-standard model where $\theta$ is a bounded formula and $a\in\mathcal{M}$ is non-standard and $T\vdash T_1$. If $p\!\in\!\mathcal{M}$ is an $T-$evaluation on $\Lambda^{\langle j\rangle}$ where $\Lambda$ is a set of terms such that $\Lambda\supseteq\Gamma=\{\underline{i}\mid i\leqslant\omega_1(a)\}$ and $j$ is a non-standard element of $\mathcal{M}$, then for any bounded formula $\varphi(x_1,\ldots,x_n)$ and any elements $i_1,\ldots,i_n\leqslant a$, $\mathcal{M}\models\varphi(i_1,\ldots,i_n) \iff
\mathfrak{M}(\Lambda,p)\models\varphi(\underline{i_1}/p,\ldots,\underline{i_n}/p)$.
\end{theorem}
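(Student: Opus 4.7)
The plan is to argue by meta-induction on the build-up of $\varphi$, resting on two technical lemmas that are both proved \emph{inside} $\mathcal M$ by ${\rm I\Delta_0}$-induction on Skolem instances of $T_1$-axioms. These are a term-evaluation lemma and a numeral-representation lemma; the main obstacle will be the latter, as it is precisely what the list of $\leqslant$-axioms in $T_1$ has been engineered to supply.

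First, the \emph{term-evaluation lemma} should say: for every $\mathcal L_A$-term $t(x_1,\ldots,x_n)$ and every $i_1,\ldots,i_n\leqslant a$, if $v:=t(i_1,\ldots,i_n)^{\mathcal M}\leqslant\omega_1(a)$, then $p\models t(\underline{i_1},\ldots,\underline{i_n})=\underline v$. The cases of variables and $\textsf S$ are trivial from the external induction on $t$; the cases of $+$ and $\cdot$ reduce respectively to the internal ($\mathcal M$-)inductions $p\models\underline c+\underline d=\underline{c+d}$ and $p\models\underline c\cdot\underline d=\underline{c\cdot d}$, both $\Delta_0$ in the parameter $p$ and proved by ${\rm I\Delta_0}$-induction on $d$ via the recursion axioms $x+\textsf 0=x$, $x+\textsf S(y)=\textsf S(x+y)$, $x\cdot\textsf 0=\textsf 0$, $x\cdot\textsf S(y)=x\cdot y+x$ of $T_1$; all intermediate numerals stay inside $\Gamma\subseteq\Lambda^{\langle j\rangle}$ thanks to the uniform bound $\omega_1(a)$. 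Second, the \emph{numeral-representation lemma} should say: for every term $t\in\Lambda^{\langle j\rangle}$ and every $n\leqslant\omega_1(a)$, if $p\models t\leqslant\underline n$ then there exists $m\leqslant n$ with $p\models t=\underline m$. It is proved inside $\mathcal M$ by ${\rm I\Delta_0}$-induction on $n$, with base case using the Skolem instance of $x\leqslant\textsf 0\leftrightarrow x=\textsf 0$ and successor step using $x\leqslant\textsf S(y)\leftrightarrow x=\textsf S(y)\vee x\leqslant y$---both axioms in $T_1$.

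With these two lemmas in hand, the main induction on $\varphi$ is routine at the atomic and propositional level. For an atomic $t(\bar x)=s(\bar x)$ (or $t(\bar x)\leqslant s(\bar x)$), the term-evaluation lemma reduces both sides of the target biconditional to a comparison of the numerals $\underline{v(t)}$ and $\underline{v(s)}$; if the values coincide, these are literally the same term, while if they differ one writes $v(s)=v(t)+d$, invokes the term lemma on $\underline{v(t)}+\underline d=\underline{v(s)}$, and uses $x\leqslant x+z$ together with the $T_1$-derivable $x\neq y\leftrightarrow\textsf S(x)\leqslant y\vee\textsf S(y)\leqslant x$ to force $p\not\models\underline{v(t)}=\underline{v(s)}$. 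The propositional clauses are handled directly by the defining clauses of $p\models(\cdot)$.

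The substantive case is the bounded existential $\varphi=\exists y\leqslant r(\bar x)\,\psi(y,\bar x)$ (the bounded universal being dual). Set $V:=r(\bar i)^{\mathcal M}$, which is bounded by a polynomial in $a$ and hence by $\omega_1(a)$. For the forward direction, a witness $k\leqslant V$ in $\mathcal M$ produces the candidate $\underline k/p$ in $\mathfrak M(\Lambda,p)$: one obtains $p\models\underline k\leqslant r(\underline{\bar i})$ by writing $V=k+d$, applying the term lemma to get $p\models\underline k+\underline d=\underline V$, and invoking $x\leqslant x+z$; the inductive hypothesis then yields $\mathfrak M(\Lambda,p)\models\psi(\underline k/p,\underline{\bar i}/p)$. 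The backward direction is where the numeral-representation lemma becomes essential: any witness $t/p$ in $\mathfrak M(\Lambda,p)$ satisfies $p\models t\leqslant r(\underline{\bar i})$, hence $p\models t\leqslant\underline V$ by the term lemma, after which the numeral-representation lemma delivers $m\leqslant V$ with $p\models t=\underline m$; so $t/p=\underline m/p$, the inductive hypothesis transfers $\mathfrak M(\Lambda,p)\models\psi(\underline m/p,\underline{\bar i}/p)$ to $\mathcal M\models\psi(m,\bar i)$, and $m\leqslant V$ supplies the desired $\mathcal M$-witness. The hardest step of the whole proof is thus the numeral-representation lemma, which is precisely why $T_1$ must contain the full set of $\leqslant$-unfolding axioms.
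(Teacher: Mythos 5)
Your two lemmas (term-evaluation and numeral-representation/end-extension) match lemmas in the paper, but the main induction has a genuine gap at precisely the point the paper's extra lemma is designed to handle. When you treat the bounded existential $\exists y\leqslant r(\bar x)\,\psi(y,\bar x)$ and pick a witness $k\leqslant V=r(\bar i)^{\mathcal M}$, you want to ``apply the inductive hypothesis'' to $\psi$ at the argument tuple $(k,\bar i)$. But the induction hypothesis of Theorem~\ref{th-t1} is only asserted for parameters $\leqslant a$, and $k$ typically exceeds $a$ (already for $r(x)=x\cdot x$ with $i_1=a$ the witness can be as large as $a^2$). The same problem reappears in your backward direction: after the numeral-representation lemma gives you $m\leqslant V$ with $p\models t=\underline m$, you again invoke the IH on $(m,\bar i)$ where $m$ may be $>a$. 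Strengthening the IH to cover parameters $\leqslant\omega_1(a)$ does not save you either, because the bound $r(\bar x)$ evaluated on such parameters can escape $\omega_1(a)$, and then your term-evaluation lemma no longer guarantees the intermediate numerals stay inside $\Gamma$.

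The paper avoids this by a device you have dropped: its Lemma~\ref{termt1} shows, purely inside a model of $T_1$, that whenever $b\leqslant t(i_1,\ldots,i_n)$ with $i_1,\ldots,i_n\leqslant a$, then $b=s(j_1,\ldots,j_m)$ for \emph{some} $\mathcal L_A$-term $s$ and \emph{some parameters} $j_1,\ldots,j_m\leqslant a$. This is a different statement from your ``term-evaluation lemma'' (which evaluates a term on numerals to a numeral). With Lemma~\ref{termt1} in hand one never substitutes the witness as a fresh large parameter; instead one substitutes it as the term $s(\bar y)$ and applies the IH to the bounded formula $\psi(s(\bar y),\bar x)$ at the tuple $(\bar j,\bar i)$, all of whose entries are $\leqslant a$. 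In the backward direction the paper again first writes the witness as $s(\ell_1,\ldots,\ell_m)$ with $\ell_\alpha\leqslant\underline a/p$, then uses the End-Extension Property (your numeral-representation lemma) to replace each $\ell_\alpha$ by a numeral $\underline{j_\alpha}$ with $j_\alpha\leqslant a$ \emph{before} invoking the IH. You should restore a version of Lemma~\ref{termt1} and restructure both directions of the bounded-quantifier step around it; the rest of your outline (the atomic case via term evaluation, the End-Extension lemma, the $\mathcal M$-internal inductions justifying both lemmas) is in order and mirrors the paper.
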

We prove the theorem by induction on (the complexity) of $\varphi$ (see also \cite{Sal11a,Sal11b}).
\begin{lemma}[Another Property of $T_1$]\label{termt1}
Suppose $\mathcal{K}\models T_1$ and $a\!\in\!\mathcal{K}$, and let $t$ be a term in $\mathcal{L}_A$. For any $i_1,\ldots,i_n\leqslant a$ in $\mathcal{K}$ and any $b\!\in\!\mathcal{K}$, if $\mathcal{K}\models b\leqslant t(i_1,\ldots,i_n)$ then there exists a term $s$ and there are some $j_1,\ldots,j_m\leqslant a$ such that $\mathcal{K}\models b=s(j_1,\ldots,j_m)$.
\end{lemma}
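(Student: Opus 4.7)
The plan is to induct on the structure of the standard term $t$, uniformly in $b$ and the parameters $i_1,\ldots,i_n$. Three of the five shapes of $t$ yield immediate cases: if $t=\textsf{0}$, the axiom $x\leqslant\textsf{0}\leftrightarrow x=\textsf{0}$ forces $b=\textsf{0}$ and I take $s=\textsf{0}$; if $t=x_k$ is a single variable, then $b\leqslant i_k\leqslant a$ and I take $s(y)=y$ with $j_1=b$; if $t=\textsf{S}(t_1)$, the axiom $x\leqslant\textsf{S}(y)\leftrightarrow x=\textsf{S}(y)\vee x\leqslant y$ either gives $b=\textsf{S}(t_1(\vec{i}))$ (set $s=\textsf{S}(t_1)$, $\vec{j}=\vec{i}$) or reduces to the inductive hypothesis applied to $t_1$.

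The additive case $t=t_1+t_2$ is the first substantive one. Using totality of $\leqslant$, if $b\leqslant t_1(\vec{i})$ I apply the inductive hypothesis to $t_1$; otherwise $t_1(\vec{i})\leqslant b$, and the subtraction axiom $x\leqslant y\rightarrow\exists z[z+x=y]$ provides some $z\in\mathcal{K}$ with $z+t_1(\vec{i})=b$. The key subclaim is $z\leqslant t_2(\vec{i})$: from $z+t_1(\vec{i})\leqslant t_1(\vec{i})+t_2(\vec{i})$, commutativity of $+$ rewrites the right side as $t_2(\vec{i})+t_1(\vec{i})$, and the right-cancellation axiom $x+z\leqslant y+z\rightarrow x\leqslant y$ strips the common summand, leaving $z\leqslant t_2(\vec{i})$. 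Applying the inductive hypothesis to $t_2$ yields $s_z(\vec{j}_z)=z$, and I assemble $b=s_z(\vec{j}_z)+t_1(\vec{i})$ by taking $s=s_z+t_1$ with combined variables $(\vec{j}_z,\vec{i})$.

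The multiplicative case $t=t_1\cdot t_2$ runs analogously, using the division axiom in place of subtraction. If $t_2(\vec{i})=\textsf{0}$, then $t_1(\vec{i})\cdot t_2(\vec{i})=\textsf{0}$ via $x\cdot\textsf{0}=\textsf{0}$, so $b=\textsf{0}$ and $s=\textsf{0}$ works. Otherwise the division axiom $y\neq\textsf{0}\rightarrow\exists q,r[x=r+q\cdot y\wedge r\leqslant y]$ produces $q,r$ with $b=r+q\cdot t_2(\vec{i})$ and $r\leqslant t_2(\vec{i})$. Since $q\cdot t_2(\vec{i})\leqslant r+q\cdot t_2(\vec{i})=b\leqslant t_1(\vec{i})\cdot t_2(\vec{i})$ (by the axiom $x\leqslant z+x$ and transitivity), the multiplicative cancellation axiom $z\neq\textsf{0}\wedge x\cdot z\leqslant y\cdot z\rightarrow x\leqslant y$ delivers $q\leqslant t_1(\vec{i})$. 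The inductive hypothesis for $t_2$ (on $r$) and for $t_1$ (on $q$) produce terms $s_r,s_q$ and parameters $\vec{j}_r,\vec{j}_q\leqslant a$, and I assemble $s=s_r+s_q\cdot t_2$ with variables $(\vec{j}_r,\vec{j}_q,\vec{i})$.

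The main obstacle I anticipate is the subclaim $z\leqslant t_2(\vec{i})$ in the additive step: the commutation of $t_1(\vec{i})+t_2(\vec{i})$ with $t_2(\vec{i})+t_1(\vec{i})$ is not justified directly by the axioms of $T_1$ explicitly enumerated in Definition~\ref{def-t1}, which include only the recursion equations for $+$ and right-cancellation. My remedy is to enlarge $T_1$ by the $\mathrm{I}\Delta_0$-provable identity $x+y=y+x$ (and, where needed, $x\cdot y=y\cdot x$); this is legitimate because $T_1$ need only be \emph{some} finite fragment of $\mathrm{I}\Delta_0$ proving the listed schemata, and commutativity lies in a finite sub-theory of $\mathrm{I}\Delta_0$. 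With this adjustment the induction goes through, and the conclusion transfers automatically to any finite extension $T\supseteq T_1$.
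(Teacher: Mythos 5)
Your proof follows the paper's own strategy---induction on the structure of $t$, with the substantive work in the $+$ and $\cdot$ cases handled via subtraction/right-cancellation and division/multiplicative-cancellation respectively---and it is correct, though it incurs a detour that the paper avoids. In the additive case you compare $b$ against $t_1(\vec{i})$; the non-trivial branch then yields $z+t_1(\vec{i})=b$ and you must cancel $t_1(\vec{i})$ from $z+t_1(\vec{i})\leqslant t_1(\vec{i})+t_2(\vec{i})$, but the enumerated axiom $x+z\leqslant y+z\rightarrow x\leqslant y$ cancels the \emph{right} argument only, so you need commutativity to rewrite the right-hand side as $t_2(\vec{i})+t_1(\vec{i})$. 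You correctly spot this and patch it by enlarging $T_1$, which is a legitimate move since $T_1$ is only pinned down by which $\mathrm{I\Delta_0}$-provable sentences it must derive. But the enlargement is unnecessary: the paper compares $b$ against $t_2(\vec{i})$ instead, so the dichotomy from $x\leqslant y\vee y\leqslant x$ gives either $b\leqslant t_2(\vec{i})$ (IH on $t_2$) or $t_2(\vec{i})\leqslant b$, whence $d+t_2(\vec{i})=b$ with the common summand already on the right in $d+t_2(\vec{i})\leqslant t_1(\vec{i})+t_2(\vec{i})$; right-cancellation then delivers $d\leqslant t_1(\vec{i})$ and the IH applies to $t_1$. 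Your multiplicative case matches the paper's exactly (no commutativity problem arises there because the division axiom already places the common factor $t_2$ on the right). So the only genuine difference is which summand gets compared to $b$ in the additive case; choosing $t_2$ makes the axiom list of Definition~\ref{def-t1} suffice as written, whereas your choice of $t_1$ is equally sound but forces the extra commutativity axiom.
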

\begin{proof}
By induction on $t$:

$\bullet $\ $t=\textsf{0}$: if $\mathcal{K}\models b\leqslant\textsf{0}$ then by the $T_1-$axiom $x\leqslant\textsf{0}\leftrightarrow x=\textsf{0}$ we have $\mathcal{K}\models b=\textsf{0}$.

$\bullet $\ $t=\textsf{S}(t_1)$: if $\mathcal{K}\models b\leqslant\textsf{S}(t_1)$ then by  $x\!\leqslant\!\textsf{S}(y)\leftrightarrow x\!=\!\textsf{S}(y)\vee
x\!\leqslant\!y$ which is a $T_1-$axiom, we have $\mathcal{K}\models b=\textsf{S}(t_1)\vee b\leqslant t_1$, and the result follows from the induction hypothesis.

$\bullet $\ $t=t_1+t_2$: if $\mathcal{K}\models b\leqslant t_1+t_2$ then by the $T_1-$axiom $x\leqslant y\vee y\leqslant x$ we have that $\mathcal{K}\models b\leqslant t_2\vee t_2\leqslant b$. If $\mathcal{K}\models b\leqslant t_2$ then the conclusion follows from the induction hypothesis. Otherwise if $\mathcal{K}\models t_2\leqslant b$ then by $x\leqslant y\rightarrow\exists z[z+x=y]$ (another $T_1-$axiom) there exists some $d\in\mathcal{K}$ such that $\mathcal{K}\models d+t_2=b$. Thus $\mathcal{K}\models d+t_2\leqslant t_1+t_2$, whence by the $T_1-$axiom $x+z\leqslant y+z\rightarrow x\leqslant y$ we have $\mathcal{K}\models d\leqslant t_1$, and the desired result follows from the induction hypothesis and the fact that $\mathcal{K}\models b=d+t_2$.

$\bullet $\ $t=t_1\cdot t_2$: assume $\mathcal{K}\models b\leqslant t_1\cdot t_2$. If $\mathcal{K}\models t_2=\textsf{0}$ then $\mathcal{K}\models t_1\cdot t_2=\textsf{0}$ by the $T_1-$axiom $x\cdot\textsf{0}=\textsf{0}$. And so $\mathcal{K}\models b\leqslant \textsf{0}$ is reduced to the first case above. Now suppose $\mathcal{K}\models t_2\not=\textsf{0}$. Then by the $T_1-$axiom $y\not=\textsf{0}\rightarrow\exists q,r [x=r+q\cdot y\wedge r\leqslant y]$ we have $\mathcal{K}\models b=r+q\cdot t_2 \wedge r\leqslant t_2$ for some $q,r\!\in\!\mathcal{K}$. By the $T_1-$axiom $x\leqslant z+x$ we have $\mathcal{K}\models q\cdot t_2\leqslant r+q\cdot t_2=b\leqslant t_1\cdot t_2$ and so from the $T_1-$axiom $x\leqslant y\leqslant z\rightarrow x\leqslant z$ it follows that $\mathcal{K}\models q\cdot t_2\leqslant t_1\cdot t_2$, and the $T_1-$axiom  $z\not=\textsf{0}\wedge x\cdot z\leqslant y\cdot z\rightarrow x\leqslant y$ implies that $\mathcal{K}\models q\leqslant t_1$ (since $\mathcal{K}\models t_2\not=\textsf{0}$). Now, the desired conclusion follows from the induction hypothesis and  $\mathcal{K}\models  b=r+q\cdot t_2\wedge r\leqslant t_2\wedge q\leqslant t_1$.
\end{proof}
\begin{lemma}[Preservation of Atomic Formulas]\label{lem-t1atom}
With the assumptions of Theorem~\ref{th-t1} for any atomic formula $\varphi(x_1,\ldots,x_n)$ and any $i_1,\ldots,i_n\leqslant a$,  we have that

$\mathcal{M}\models\varphi(i_1,\ldots,i_n) \iff
\mathfrak{M}(\Lambda,p)\models
\varphi(\underline{i_1}/p,\ldots,\underline{i_n}/p)$.
\end{lemma}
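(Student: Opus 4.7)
The plan is to reduce the atomic biconditional to two arithmetic-of-numerals facts, both proved inside $\mathcal{M}$ by $\Delta_0$-induction on the (possibly nonstandard) numeral indices.

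First I would establish a \emph{term-evaluation identity}: for every $\mathcal{L}_A$-term $t(x_1,\ldots,x_n)$ and any $i_1,\ldots,i_n\leqslant a$ with $m=t^{\mathcal{M}}(i_1,\ldots,i_n)\leqslant\omega_1(a)$, one has $p\models t(\underline{i_1},\ldots,\underline{i_n})=\underline{m}$. This goes by structural induction on $t$. The cases $t=\textsf{0}$ and $t=x_j$ are trivial, and $t=\textsf{S}(t_1)$ follows from the inductive hypothesis together with Leibniz's Law. For $t=t_1+t_2$ (respectively $t=t_1\cdot t_2$), the inductive hypothesis applied to $t_1,t_2$ together with Leibniz reduces matters to an auxiliary identity $p\models\underline{n_1}+\underline{n_2}=\underline{n_1+n_2}$ (respectively $p\models\underline{n_1}\cdot\underline{n_2}=\underline{n_1\cdot n_2}$), valid whenever the sum (respectively product) does not exceed $\omega_1(a)$. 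This auxiliary identity is obtained by bounded induction in $\mathcal{M}$ on $n_2$ with $n_1$ fixed, using the Skolem instances of the recursion axioms $x+\textsf{0}=x$ and $x+\textsf{S}(y)=\textsf{S}(x+y)$ (respectively $x\cdot\textsf{0}=\textsf{0}$ and $x\cdot\textsf{S}(y)=x\cdot y+x$) which $p$ must satisfy because $T\supseteq T_1$. All intermediate terms $\underline{n_1}+\underline{k}$ and $\textsf{S}(\underline{n_1+k})$ (respectively $\underline{n_1}\cdot\underline{k}$ and $\underline{n_1\cdot k}+\underline{n_1}$) live in $\Lambda^{\langle 1\rangle}\subseteq\Lambda^{\langle j\rangle}$, so the required Skolem instances are available to $p$.

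Second I would establish a \emph{numeral separation}: for all $n,m\leqslant\omega_1(a)$, $p\models\underline{n}=\underline{m}$ iff $n=m$, and $p\models\underline{n}\leqslant\underline{m}$ iff $n\leqslant m$. One direction (equality or order between $n,m$ yielding the corresponding satisfaction under $p$) is immediate from the reflexivity of $\approx_p$ in Lemma~\ref{equiorder} combined with the $\leqslant$-recursion axioms. For the contrapositive, I would use the consequences of $T_1$ isolated in the Remark, namely $\textsf{S}(x)\not=\textsf{0}$ and $\textsf{S}(x)=\textsf{S}(y)\rightarrow x=y$, together with $x\leqslant\textsf{0}\leftrightarrow x=\textsf{0}$, $x\leqslant\textsf{S}(y)\leftrightarrow x=\textsf{S}(y)\vee x\leqslant y$, and $x\not\leqslant y\leftrightarrow\textsf{S}(y)\leqslant x$. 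A bounded induction in $\mathcal{M}$ on (say) $\min(n,m)$ then yields both directions.

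To conclude the lemma, let $\varphi$ be the atomic formula $t_1=t_2$ or $t_1\leqslant t_2$ and set $n_j=t_j^{\mathcal{M}}(i_1,\ldots,i_n)$. Since $t_j$ has standard depth and each input is $\leqslant a$, each $n_j$ is at most $a^c$ for some standard $c$, hence below $\omega_1(a)$ because $a$ is nonstandard. Then $\mathcal{M}\models\varphi(i_1,\ldots,i_n)$ iff the corresponding relation between $n_1,n_2$ holds in $\mathcal{M}$, iff (by numeral separation) $p\models\varphi(\underline{n_1},\underline{n_2})$, iff (by the term-evaluation identity and Leibniz) $p\models\varphi(t_1(\underline{i_1},\ldots,\underline{i_n}),t_2(\underline{i_1},\ldots,\underline{i_n}))$, iff $\mathfrak{M}(\Lambda,p)\models\varphi(\underline{i_1}/p,\ldots,\underline{i_n}/p)$ by the interpretation clauses in Definition~\ref{defhmodel}. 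The main obstacle I foresee is the $\Delta_0$-definability of the predicate ``$p\models\psi$'' for ground quantifier-free $\psi$; without it, neither of the above bounded inductions in $\mathcal{M}$ can be carried out. This has to be extracted from the efficient coding recalled at the start of Section~3: because $\log(\ulcorner p\urcorner)\in\mathcal{O}(\log(\ulcorner\Lambda\urcorner))$ and because ``there exists a sub-sequence of $p$ of the prescribed shape'' can be tested by a bounded search inside $p$, the relations $t\thickapprox_p s$ and $t\prec_p s$ are $\Delta_0$ in the parameters, and so is the induced atomic satisfaction relation.
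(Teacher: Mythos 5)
Your proof is correct and arrives at the same lemma, but it is organized differently from the paper's argument. The paper reduces the biconditional to one implication using the two $T_1$-axioms $x\not=y\leftrightarrow\textsf{S}(x)\leqslant y\vee\textsf{S}(y)\leqslant x$ and $x\not\leqslant y\leftrightarrow\textsf{S}(y)\leqslant x$, and then reduces the $\leqslant$ case to the $=$ case by invoking Lemma~\ref{termt1}: from $\mathcal{M}\models b+t=s$ one replaces the witness $b$ by an $\mathcal{L}_A$-term $r(j_1,\ldots,j_m)$ with $j_\alpha\leqslant a$, turning an inequality into an equality of terms. The core of the paper's proof is then a single structural induction on $t$ showing $\mathcal{M}\models t(i_1,\ldots,i_n)=i$ implies $p\models t(\underline{i_1},\ldots,\underline{i_n})=\underline{i}$. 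You instead factor the argument into two self-contained sub-claims — the term-evaluation identity and the numeral separation — and never need Lemma~\ref{termt1}. What you lose is uniformity with the rest of Section~3.2 (Lemma~\ref{termt1} is the technical workhorse there and will reappear in the proof of Theorem~\ref{th-t1}); what you gain is that the $=$ and $\leqslant$ cases are handled symmetrically rather than by reduction, and that the ``false'' direction is handled once via numeral separation rather than case-by-case via the negated-atom characterizations.

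One genuine contribution of your write-up, not made explicit in the paper's proof of this lemma, is flagging that the bounded inductions inside $\mathcal{M}$ (on $n_2$ for the addition/multiplication identities, on $\min(n,m)$ for numeral separation) require the satisfaction relation ``$p\models\psi$'' for ground open $\psi$ to be $\Delta_0$ in $\ulcorner p\urcorner$ and $\ulcorner\psi\urcorner$. Your justification — that $t\thickapprox_p s$ and $t\prec_p s$ are decided by a search over subsequences of $p$, which is bounded because $|p|\leqslant\log(\ulcorner p\urcorner)$ — is exactly the point the efficient-coding remarks at the top of Section~3 are meant to license, and it is correct. Minor inaccuracy: some of the intermediate terms in your auxiliary identities (e.g.\ $\textsf{S}(\underline{n_1}+\underline{k})$ or $\underline{n_1\cdot k}+\underline{n_1}$) live in $\Lambda^{\langle 2\rangle}$ rather than $\Lambda^{\langle 1\rangle}$, but since $j$ is nonstandard this is harmless and does not affect the argument.
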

\begin{proof}
By the $T_1-$axioms $x\not=y\leftrightarrow\textsf{S}(x)\leqslant y\vee\textsf{S}(y)\leqslant x$ and  $x\not\leqslant y\leftrightarrow\textsf{S}(y)\leqslant x$ it suffices to prove the one direction only:  $\mathcal{M}\models\varphi(i_1,\ldots,i_n) \Longrightarrow
\mathfrak{M}(\Lambda,p)\models
\varphi(\underline{i_1}/p,\ldots,\underline{i_n}/p)$. If $\varphi= ``t\leqslant s"$ for some $\mathcal{L}_A-$terms $t$ and $s$, then $\mathcal{M}\models t\leqslant s$ implies the existence of some $b\!\in\!\mathcal{M}$ such that $\mathcal{M}\models b+t=s$.  By the $T_1-$axiom $x\leqslant x+z$, $\mathcal{M}\models b\leqslant s$ so by Lemma~\ref{termt1} there exists an $\mathcal{L}_A-$term $r$ (and some $j_1,\ldots,j_m\leqslant a$) such that $\mathcal{K}\models b=r$. Whence, $\mathcal{M}\models r+t=s$. So, noting that $\mathcal{M},\mathfrak{M}(\Lambda,p)\models T_1$, it suffices to prove the lemma for the atomic formula $\varphi$ of the form $\varphi = ``t=s"$.

For that we first note that if $i_1,\ldots,i_n\leqslant a$ then $t(i_1,\ldots,i_n),s(i_1,\ldots,i_n)\leqslant\omega_1(a)$ holds. Suppose we have  $\mathcal{M}\models t(i_1,\ldots,i_n)=s(i_1,\ldots,i_n)=i$. We show by induction on (the complexity of) $t$ that the condition $\mathcal{M}\models t(i_1,\ldots,i_n)=i$ implies $\mathfrak{M}(\Lambda,p)\models t(\underline{i_1}/p,\ldots,\underline{i_n}/p)=\underline{i}/p$. Let us
 note that the statement $\mathfrak{M}(\Lambda,p)\models t(\underline{i_1}/p,\ldots,\underline{i_n}/p)=\underline{i}/p$ is equivalent to  $\mathcal{M}\models ``p\models t(\underline{i_1},\ldots,\underline{i_n})=\underline{i}"$. So, it suffices to show the equivalence $\mathcal{M}\models t(i_1,\ldots,i_n)=i\leftrightarrow ``p\models t(\underline{i_1},\ldots,\underline{i_n})=\underline{i}"$ by induction on $t$. For $t=\textsf{0}$ and $t=\textsf{S}(t_1)$ the result follows from the definition $\underline{0}=\textsf{0}$ and $\underline{j+1}=\textsf{S}(\underline{j})$. And for $t=t_1+t_2$ and $t=t_1\cdot t_2$ the result follows from the $T_1-$axioms  $x+\textsf{0}=x$,  $x+\textsf{S}(y)=\textsf{S}(x+y)$,
$x\cdot\textsf{0}=\textsf{0}$, and $ x\cdot\textsf{S}(y)=x\cdot y+ x$.
\end{proof}

Hence, the lemma also holds for open formulas $\varphi$ as well. For bounded formulas we note that the range of quantifiers of $\varphi(i_1,\ldots,i_n)$ for $i_1,\ldots,i_n\leqslant a$ is contained in the set $\{j\mid j\leqslant\omega_1(a)\}$. This is formally expressed in the following lemma.
\begin{lemma}[End-Extension Property]\label{endext}
With the assumptions of Theorem~\ref{th-t1}, if for some $i\leqslant a$ and some term $t$ we have $(\mathcal{M}\models)p\models t\leqslant \underline{i}$  then there exists some $j\leqslant i$ such that $(\mathcal{M}\models)p\models t=\underline{j}$.
\end{lemma}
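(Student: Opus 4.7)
The plan is to proceed by bounded induction on $i$ inside $\mathcal{M}$. Set
\[
P(i)\;:=\;\forall t\!\in\!\Lambda^{\langle j\rangle}\,\big[\,p\models t\leqslant\underline{i}\;\longrightarrow\;\exists k\leqslant i\,(p\models t=\underline{k})\,\big].
\]
Because $i\mapsto\underline{i}$ is a $\Delta_0$-definable function (its values are bounded by a fixed polynomial in $i$), because the quantifier over $t$ is bounded by the code of $\Lambda^{\langle j\rangle}\!\in\!\mathcal{M}$, and because the atomic satisfaction predicate $p\models(\cdot)$ is $\Delta_0$ in its parameters, the formula $P(i)$ is bounded; hence $\mathcal{M}\models{\rm I\Delta_0}$ supports induction on it up to $a$.

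For the base case $P(0)$, suppose $p\models t\leqslant\underline{0}=\textsf{0}$. Since $t,\textsf{0}\!\in\!\Lambda^{\langle j\rangle}$, the Skolem instance $t\leqslant\textsf{0}\rightarrow t=\textsf{0}$ of the $T_1$-axiom $x\leqslant\textsf{0}\leftrightarrow x=\textsf{0}$ is available in $\Lambda^{\langle j\rangle}$, and because $p$ is a $T$-evaluation with $T\supseteq T_1$, we obtain $p\models t=\textsf{0}=\underline{0}$, yielding $k=0$. For the inductive step $P(i)\Rightarrow P(i+1)$ with $i+1\leqslant a$, suppose $p\models t\leqslant\underline{i+1}$. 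By definition $\underline{i+1}=\textsf{S}(\underline{i})$, and both $\underline{i}$ and $\textsf{S}(\underline{i})$ lie in $\Lambda^{\langle j\rangle}$ (the former is in $\Gamma\subseteq\Lambda$ since $i+1\leqslant a\leqslant\omega_1(a)$, the latter is obtained by one application of the $\mathcal{L}_A$-symbol $\textsf{S}$, and $j$ is non-standard). Hence the Skolem instance
$t\leqslant\textsf{S}(\underline{i})\rightarrow t=\textsf{S}(\underline{i})\vee t\leqslant\underline{i}$
of the $T_1$-axiom $x\leqslant\textsf{S}(y)\leftrightarrow x=\textsf{S}(y)\vee x\leqslant y$ is available and satisfied by $p$. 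In the first disjunct we immediately get $p\models t=\underline{i+1}$, so $k=i+1$ works. In the second disjunct, $p\models t\leqslant\underline{i}$, and the induction hypothesis $P(i)$ produces some $k\leqslant i\leqslant i+1$ with $p\models t=\underline{k}$.

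The main technical obstacle is to be sure that the induction predicate $P(i)$ is genuinely $\Delta_0$ in parameters that $\mathcal{M}$ can see, so that ${\rm I\Delta_0}$-induction applies uniformly up to the non-standard bound $a$; this is exactly what the efficient coding of evaluations and Skolem hulls recalled at the start of Section~3 (so that $\log\ulcorner p\urcorner\!\in\!\mathcal{O}(\log\ulcorner\Lambda\urcorner)$, and membership in $\Lambda^{\langle j\rangle}$ is bounded-definable) was set up to guarantee. Once that bookkeeping is in place, the rest is the routine two-case unwinding above, driven entirely by the two $T_1$-axioms for $\leqslant$ against $\textsf{0}$ and $\textsf{S}$.
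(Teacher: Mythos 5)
Your induction skeleton is identical to the paper's: split on whether $\underline{i}$ is $\textsf{0}$ or $\textsf{S}(\underline{j})$, and in each case use the two $T_1$-sentences $x\leqslant\textsf{0}\leftrightarrow x=\textsf{0}$ and $x\leqslant\textsf{S}(y)\leftrightarrow x=\textsf{S}(y)\vee x\leqslant y$ to unwind the inequality. But there is a genuine gap in how you get $p$ to satisfy the relevant instances. You write that ``the Skolem instance $t\leqslant\textsf{0}\rightarrow t=\textsf{0}$ of the $T_1$-axiom $x\leqslant\textsf{0}\leftrightarrow x=\textsf{0}$ is available in $\Lambda^{\langle j\rangle}$, and because $p$ is a $T$-evaluation with $T\supseteq T_1$, we obtain $p\models t=\textsf{0}$.'' This treats the sentences listed in Definition~\ref{def-t1} as if they were literal axioms of $T_1$, hence of $T$. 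They are not: Definition~\ref{def-t1} only requires that $T_1$ be a finite fragment of ${\rm I\Delta_0}$ \emph{that proves} those sentences. A $T$-evaluation is required to satisfy Skolem instances of the actual axioms of $T$ that are available, not Skolem instances of arbitrary $T$-provable sentences. So from $T\vdash\forall x[x\leqslant\textsf{0}\leftrightarrow x=\textsf{0}]$ and $t\in\Lambda^{\langle j\rangle}$ it does \emph{not} immediately follow that $p\models t\leqslant\textsf{0}\rightarrow t=\textsf{0}$.

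The missing ingredient is exactly Lemma~\ref{lem-univ} (Herbrand Proof of Universal Formulas): from $T\vdash\forall x\psi(x)$ with $\psi$ open and $t$ in a set of terms, there is a \emph{standard} $k$ such that every $T$-evaluation on the $k$-th Skolem hull satisfies $\psi(t)$. Since $j$ is non-standard, enlarging by a standard $k$ stays inside $\Lambda^{\langle j\rangle}$, and this is precisely how the paper's proof licenses $p\models t\leqslant\textsf{0}\leftrightarrow t=\textsf{0}$ and $p\models t\leqslant\textsf{S}(\underline{j})\leftrightarrow t=\textsf{S}(\underline{j})\vee t\leqslant\underline{j}$. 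Without invoking it your two clinching steps are unjustified. Your care about the $\Delta_0$-complexity of the induction predicate $P(i)$ so that the induction runs inside $\mathcal{M}$ up to the non-standard bound is a legitimate and welcome observation (the paper leaves it tacit), but it does not compensate for the missing appeal to Lemma~\ref{lem-univ}; insert that and the rest of your argument matches the paper's.
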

\begin{proof} By induction on the term $\underline{i}$.  For $i=\textsf{0}$, if $p\models t\leqslant\textsf{0}$ then by Lemma~\ref{lem-univ}, and the $T_1-$axiom $x\leqslant\textsf{0}\leftrightarrow x=\textsf{0}$, we have $p\models t=\textsf{0}=\underline{0}$. For $\underline{i}=\textsf{S}({\underline{j}})$, if $p\models t\leqslant\textsf{S}(\underline{j})$ then by Lemma~\ref{lem-univ}, and the $T_1-$axiom $x\leqslant\textsf{S}(y)\leftrightarrow x=\textsf{S}(y)\vee
x\leqslant y$, we must have that $p\models t=\textsf{S}(\underline{j})\vee t\leqslant \underline{j}$. Now the conclusion follows from the induction hypothesis.
\end{proof}

Now we can prove Theorem~\ref{th-t1}.

\begin{proof} ({\bf of Theorem~\ref{th-t1}})
By induction on (the complexity of the bounded formula) $\varphi$. As the lemma has been proved for open formulas $\varphi$, it suffices to show that if the lemma holds for the (bounded) formula $\varphi$ then it also holds for the (bounded) formula $\exists x\leqslant t(i_1,\ldots,i_n)\varphi(x,i_1,\ldots,i_n)$ where $t$ is an $\mathcal{L}_A-$term; in the other words:
\newline\centerline{
$\mathcal{M}\models\exists x\leqslant t(i_1,\ldots,i_n)\varphi(x,i_1,\ldots,i_n) \iff \mathfrak{M}(\Lambda,p)\models\exists x\leqslant t(\underline{i_1}/p,\ldots,\underline{i_n}/p)
\varphi(\underline{i_1}/p,\ldots,\underline{i_n}/p)$.}
\noindent $\bullet$\ If $\mathcal{M}\models b\leqslant t(i_1,\ldots,i_n)\wedge\varphi(b,i_1,\ldots,i_n)$, for some $b\in\mathcal{M}$, then by Lemma~\ref{termt1} there are terms $s$ and elements $j_1,\ldots,j_m\leqslant a$ such that $\mathcal{M}\models b=s(j_1,\ldots,j_m)$. So, we have $\mathcal{M}\models\varphi(s(j_1,\ldots,j_m),i_1,\ldots,i_n)$. Whence, by the induction hypothesis we also have $\mathfrak{M}(\Lambda,p)\models
\varphi(s(\underline{j_1}/p,\ldots,\underline{j_m}/p),\underline{i_1}/p,\ldots,\underline{i_n}/p)$, thus, noting that we already have $\mathfrak{M}(\Lambda,p)\models s(\underline{j_1}/p,\ldots,\underline{j_m}/p)\leqslant t(\underline{i_1}/p,\ldots,\underline{i_n}/p)$, the desired conclusion holds: $\mathfrak{M}(\Lambda,p)\models\exists x\leqslant t(\underline{i_1}/p,\ldots,\underline{i_n}/p)
\varphi(\underline{i_1}/p,\ldots,\underline{i_n}/p)$.

\noindent $\bullet$\ Conversely, if $\mathfrak{M}(\Lambda,p)\models d\leqslant t(\underline{i_1}/p,\ldots,\underline{i_n}/p)\wedge
\varphi(d,\underline{i_1}/p,\ldots,\underline{i_n}/p)$ holds for some $d\in\mathfrak{M}(\Lambda,p)$ then by Lemma~\ref{termt1} there are some $\mathcal{L}_A-$term $s$ and some $l_1,\ldots,l_m\leqslant \underline{a}/p$ such that $\mathfrak{M}(\Lambda,p)\models d=s(l_1,\ldots,l_m)$. For each $\alpha\leqslant m$ there is some term $\ell_\alpha\in\Lambda^{\langle\infty\rangle}$ such that $l_\alpha=\ell_\alpha/p$.  For each such $\alpha$ we also have that  $\mathfrak{M}(\Lambda,p)\models \ell_\alpha/p\leqslant \underline{a}/p$ or equivalently $\mathcal{M}\models ``p\models \ell_\alpha\leqslant \underline{a}"$. So, by Lemma~\ref{endext} there exists some $j_\alpha\leqslant a$ for which we have  $\mathcal{M}\models \ell_\alpha=\underline{j_\alpha}$. Whence, $\mathfrak{M}(\Lambda,p)\models d=s(\underline{j_1}/p,\ldots,\underline{j_m}/p)$ and so

$\mathfrak{M}(\Lambda,p)\models
s(\underline{j_1}/p,\ldots,\underline{j_m}/p)\leqslant t(\underline{i_1}/p,\ldots,\underline{i_n}/p)$, and

$\mathfrak{M}(\Lambda,p)\models
\varphi(s(\underline{j_1}/p,\ldots,\underline{j_m}/p),\underline{i_1}/p,\ldots,\underline{i_n}/p)$.

\noindent Thus, by the induction hypothesis we have

$\mathcal{M}\models
s(j_1,\ldots,j_m)\leqslant t(i_1,\ldots,i_n)$, and
 $\mathcal{M}\models
\varphi(s(j_1,\ldots,j_m),i_1,\ldots,i_n)$.

\noindent So, we conclude that $\mathcal{M}\models\exists x\leqslant t(i_1,\ldots,i_n)\varphi(x,i_1,\ldots,i_n)$.
\end{proof}

Let us repeat where we are now: in looking for a finite fragment $T\subseteq{\rm I\Delta_0}$ such that ${\rm I\Delta_0}\not\vdash{\rm HCon}(T)$ we found a finite fragment $T_0\subseteq{\rm I\Delta_0}$ and a bounded formula $\theta(x)$ such that  $T_0\vdash\neg\exists x\!\in\!\log\mathcal{I}\theta(x)$ but the theory ${\rm I\Delta_0}+\exists x\!\in\!\mathcal{I}\theta(x)$ is consistent and has a model $\mathcal{M}\models{\rm I\Delta_0}+[a\!\in\!\mathcal{I}\wedge\theta(a)]$. Then we aim at showing that $\mathcal{M}\not\models{\rm HCon}(T)$. If $\mathcal{M}\models{\rm HCon}(T)$ then we form the set of formulas $\Gamma=\{\underline{i}\mid i\leqslant\omega_1(a)\}$ for which $\omega_2(\ulcorner\Gamma\urcorner)$ exists (by the very definition of $\mathcal{I}$ and the assumption $a\!\in\!\mathcal{I}$), and so we can form the model $\mathfrak{M}(\Gamma,p)$ where $p$ is an $T-$evaluaiton on $\Gamma^{\langle j\rangle}$ (where $j\leqslant\log^4(\ulcorner\Gamma\urcorner)$ can be taken to be non-standard if $a$ is so). The theory $T_1$ had the property that $\mathfrak{M}(\Gamma,p)\models\theta(\underline{a}/p)$ (by Theorem~\ref{th-t1}), and in the next subsection we introduce a finite fragment $T_2\subseteq{\rm I\Delta_0}$ such that for a suitable $\Lambda\supseteq\Gamma$ (to be defined later) we will have $\mathfrak{M}(\Lambda,p)\models \underline{a}/p\!\in\!\log\mathcal{I}$. Then by taking $T$ to be any finite fragment of ${\rm I\Delta_0}$ which extends $T_0\cup T_1\cup T_2$ we will conclude that $\mathcal{M}\models\neg{\rm HCon}(T)$.
\subsection{The Third Finite Fragment}
The fragments $T_0$ and $T_1$ were chosen not by their axioms but by their implications; $T_0$ had to prove $\neg\exists x\in\log\mathcal{I}\theta(x)$ (Definition~\ref{def-t0}), and $T_1$ had to prove some certain arithmetical statements (Definition~\ref{def-t1}). But for $T_2$ we require that it contains one of the following sentences as (one of) its (explicit) axioms (not only its consequences).
\begin{definition}[Axioms for Totality of Squaring Function]\label{def-sq}

(1) The induction principle for the bounded formula $\psi(x) = ``\exists y\leqslant x^2[y=x\cdot x]"$ is denoted by ${\rm Ind}_{\square}: \ \psi(\textsf{0})\wedge\forall x\big(\psi(x)\rightarrow\psi(\textsf{S}(x))\big)\rightarrow\forall x\psi(x)$. Or, in other words (cf. Examples~\ref{q1example},\ref{q2example}) ${\rm Ind}_{\square}$, which is an axiom of the theory ${\rm I\Delta_0}$, is the sentence:
\newline\centerline{
    $\exists y\leqslant \textsf{0}^2[y=\textsf{0}\cdot \textsf{0}]\wedge\forall x\big(\exists y\leqslant x^2[y=x\cdot x]\rightarrow\exists y\leqslant \textsf{S}(x)^2[y=\textsf{S}(x)\cdot \textsf{S}(x)]\big)\Longrightarrow\forall x\exists y\leqslant x^2[y=x\cdot x]$.}

(2) The $\Pi_1-$sentence expressing the totality of  squaring  is denoted by  $\Omega_0: \ \forall x\exists y\leqslant x^2[y=x\cdot x]$.    \hfill $\lozenge\!\!\!\!\!\lozenge$
\end{definition}
We denote by $\textswab{q}(x)$ the Skolem function symbol of the formula $\exists y\leqslant x^2[y=x\cdot x]$  (cf. Examples~\ref{q1example},\ref{q2example}). Then the Skolemized forms of the axioms of Definition~\ref{def-sq} will be as
\begin{enumerate}
\item $[u\not\leqslant\textsf{0}^2\vee u\not=\textsf{0}\cdot\textsf{0}]\ \bigvee$

$\big[[\textswab{q}(\textswab{c})\leqslant\textswab{c}^2
  \wedge\textswab{q}(\textswab{c})=\textswab{c}\cdot\textswab{c}]
  \wedge [v\not\leqslant\textsf{S}(\textswab{c})^2\vee v\not=\textsf{S}(\textswab{c})\cdot\textsf{S}(\textswab{c})]\big]\ \bigvee$

$[\textswab{q}(x)\leqslant x^2\wedge\textswab{q}(x)=x\cdot x]$,

where $u,v,x$ are free variables and $\textswab{c}$ is the Skolem constant as in Example~\ref{q1example}.
\item $\textswab{q}(x)\leqslant x^2 \wedge \textswab{q}(x)=x\cdot x$.
\end{enumerate}
Define the terms $\textsf{q}_i$'s by induction: $\textsf{q}_0=\textsf{S}(\textsf{S}(\textsf{0}))$ and $\textsf{q}_{i+1}=\textswab{q}(\textsf{q}_i)$. It can be easily seen that $\textsf{q}_i$ represents the number $\exp^{2}(i)$, while for the code of $\textsf{q}_i$ we have $\log(\ulcorner\textsf{q}_i\urcorner)\in\mathcal{O}\big(\log(\exp(i))\big)$. That is to say that while the value of the term $\textsf{q}_i$ is of double exponential, the code of it is of (single) exponential. This (one) exponential gap, will make our proof to go through.

Formulating the statement $``x\in\textfrak{log}^2"$ can be stated as ``there  exists a sequence $s$ such that $(s)_0=2$ and $|s|=x+1$ and for any $i<x$ we have $(s)_{i+1}=(s)_i\cdot(s)_i$". And $``y\in\log\mathcal{I}"$ can be stated as $``4y^4\in\textfrak{log}^2"$. Put $\Upsilon=\{\textsf{q}_i\mid i\leqslant 4a^4\}$. Then any $\Omega_0-$evaluaton or ${\rm Ind}_\square-$evaluation on $\Upsilon^{\langle\infty\rangle}$ must satisfy $\textsf{q}_{i+1}=\textsf{q}_i\cdot\textsf{q}_i$ for any $i<4a^4$.  If $p$ is any such evaluation, then $\mathfrak{M}(\Upsilon,p)\models \forall i<4(\underline{a}/p)^4[\textsf{q}_{i+1}/p=\textsf{q}_i/p\cdot\textsf{q}_i/p]$. We require the finite fragment $T_2\subseteq{\rm I\Delta_0}$ to have the property that for any model $\mathcal{K}\models T_2$ if there are elements $q_0,q_1,\ldots,q_b\in\mathcal{K}$ such that $\mathcal{K}$ satisfies $q_0=2$ and $q_{i+1}=q_i^2$ for any $i<b$, then $\mathcal{K}\models b\in\textfrak{log}^2$.  Let us note that the code of the sequence $\langle\exp^2(0),\exp^2(1),\ldots,\exp^2(b)\rangle$ is roughly bounded by $\prod_{i\leqslant b}\exp^2(i) \approx (\exp^2(b))^2=\exp^2(b+1)$. So, in the presence of $q_0,q_1,\ldots,q_b\in\mathcal{K}$ with the above property, the (code of the) sequence $s$ with the property ``$(s)_0=2$,  $|s|=x+1$ and for any $i<x$, $(s)_{i+1}=(s)_i\cdot(s)_i$" must exist. Note also that ${\rm I\Delta_0}\vdash\forall i[i\in\textfrak{log}^2\rightarrow i+1\in\textfrak{log}^2]$. \qquad\qquad $(\divideontimes)$
\begin{definition}[The Third Fragment $T_2$]\label{def-t2}

(1) If the usual axiomatization of ${\rm I\Delta_0}$ is taken into account, then let $T_2$ be a finite fragment of it which contains the axiom ${\rm Ind}_\square$ and has the property $(\divideontimes)$ above.

(2) If ${\rm I\Delta_0}$ has been axiomatized all by $\Pi_1-$formulas, where the induction axioms are in the form
    \newline\centerline{$\forall y\big(\varphi(\textsf{0})\wedge\forall x<y[\varphi(x)\rightarrow\varphi(\textsf{S}(x))]\rightarrow\forall x\leqslant y\varphi(x)\big)$} for bounded $\varphi$, then we take the theory $T_2$ to be a finite fragment of ${\rm I\Delta_0^\Pi}+\Omega_0$, where ${\rm I\Delta_0^\Pi}$ is the above $\Pi_1-$axiomaitzation of ${\rm I\Delta_0}$, together with the axiom $\Omega_0$, such that it has the property $(\divideontimes)$ above. So, in this case $T_2$ is a $\Pi_1-$theory.\hfill $\lozenge\!\!\!\!\!\lozenge$
\end{definition}
Let us reiterate the main property of $T_2$ again.
\paragraph{\bf The Main Property of $T_2$} For a model $\mathcal{K}\models T_2$ if there are $q_0,q_1,\ldots,q_b\in\mathcal{K}$ such that for any $j<b$ we have $\mathcal{K}\models q_{j+1}=q_j^2$ then $\mathcal{K}\models``b\in\textfrak{log}^2"$. \hfill$\lozenge\!\!\!\!\!\lozenge$
\subsection{The Proof of the Main Result}
Let $T$ be any finite fragment of ${\rm I\Delta_0}$ or ${\rm I\Delta_0^\Pi}+\Omega_0$ such that $T\supseteq T_0\cup T_1\cup T_2$. If $T_2$ is taken as in the clause (1) of Definition~\ref{def-t2} then $T$ is truly a finite fragment of ${\rm I\Delta_0}$, and if $T_2$ is taken as in the clause (2) of Definition~\ref{def-t2} then $T$ is a finite ${\rm I\Delta_0}-$derivable $\Pi_1-$theory, whose conjunction (denoted by $U$) is a ${\rm I\Delta_0}-$derivable $\Pi_1-$sentence.
\begin{theorem}[The Main Theorem]\label{th-main}

\noindent (1) For a finite fragment $T$ of ${\rm I\Delta_0}$ we have ${\rm I\Delta_0}\not\vdash{\rm HCon}(T)$.

\noindent (2) There exists an ${\rm I\Delta_0}-$derivable $\Pi_1-$sentence $U$ such that  ${\rm I\Delta_0}\not\vdash{\rm HCon}(U)$.
\end{theorem}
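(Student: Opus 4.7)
The plan is to carry out the contradiction outlined at the end of Subsection~3.2 and to handle both parts uniformly. Fix once and for all a finite $T\supseteq T_0\cup T_1\cup T_2$, choosing $T_2$ from clause~(1) of Definition~\ref{def-t2} for part~(1) and from clause~(2) for part~(2); in the latter case every axiom of $T$ is $\Pi_1$, so its conjunction $U$ is a $\Pi_1$-sentence with ${\rm I\Delta_0}\vdash{\rm HCon}(T)\leftrightarrow{\rm HCon}(U)$, and (2) reduces to showing ${\rm I\Delta_0}\not\vdash{\rm HCon}(T)$. Suppose for contradiction that ${\rm I\Delta_0}\vdash{\rm HCon}(T)$, and let $\mathcal{M}\models{\rm I\Delta_0}+[a\!\in\!\mathcal{I}\wedge\theta(a)]$ be the model of Definition~\ref{def-t0}; then $\mathcal{M}\models{\rm HCon}(T)$ as well. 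If $a$ is standard, the bounded sentence $\theta(a)$ is absolute to the standard submodel, which satisfies $T_0$ and hence $\neg\exists x\!\in\!\log\mathcal{I}\,\theta(x)$; but a standard $a$ automatically lies in $\log\mathcal{I}$, a contradiction. Henceforth assume $a$ is non-standard.

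Next I would pin down the ambient set of terms. Let $\Gamma=\{\underline{i}\mid i\leqslant\omega_1(a)\}$ and $\Upsilon=\{\textsf{q}_i\mid i\leqslant 4a^4\}$ with $\textsf{q}_0=\textsf{S}(\textsf{S}(\textsf{0}))$ and $\textsf{q}_{i+1}=\textswab{q}(\textsf{q}_i)$ as in Subsection~3.3, and put $\Lambda=\Gamma\cup\Upsilon$. The efficient coding bounds $\log(\ulcorner\underline{i}\urcorner)\in\mathcal{O}(\log 2^i)$ and $\log(\ulcorner\textsf{q}_i\urcorner)\in\mathcal{O}(\log\exp(i))$ give $\log(\ulcorner\Lambda\urcorner)\in\mathcal{O}(\log\exp^2(2(\log a)^2))$, so the hypothesis $a\!\in\!\mathcal{I}$ (Definition~\ref{def-cuti}) is just sharp enough to guarantee $\omega_2(\ulcorner\Lambda\urcorner)$ in $\mathcal{M}$; consequently $\Lambda^{\langle j\rangle}$ is definable in $\mathcal{M}$ for any non-standard $j\leqslant\log^4(\ulcorner\Lambda\urcorner)$ (which exists because $a$ is non-standard). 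The assumption $\mathcal{M}\models{\rm HCon}(T)$ then produces a $T$-evaluation $p\!\in\!\mathcal{M}$ on $\Lambda^{\langle j\rangle}$, and Lemma~\ref{lem-hmodel} delivers the Herbrand-like structure $\mathfrak{M}(\Lambda,p)\models T$.

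It remains to verify two facts in $\mathfrak{M}(\Lambda,p)$: that $\theta(\underline{a}/p)$ holds, and that $\underline{a}/p\!\in\!\log\mathcal{I}$. The first is a direct application of Theorem~\ref{th-t1} to the bounded formula $\theta$ and the element $a$ (using $T\supseteq T_1$, $\Lambda\supseteq\Gamma$, and $\mathcal{M}\models\theta(a)$). For the second, fix $i<4a^4$: then $\textsf{q}_i,\textsf{q}_{i+1}=\textswab{q}(\textsf{q}_i)\in\Upsilon\subseteq\Lambda$ and $\textsf{q}_i\cdot\textsf{q}_i\in\Lambda^{\langle 1\rangle}$, so the Skolem instance at $x=\textsf{q}_i$ of the ${\rm Ind}_\square$ axiom (or of $\Omega_0$ in case~(2)) is available in $\Lambda^{\langle j\rangle}$ and must be satisfied by $p$; exactly as in Example~\ref{q2example} its first two disjuncts are refuted, leaving $p\models\textsf{q}_{i+1}=\textsf{q}_i\cdot\textsf{q}_i$. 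Thus inside $\mathfrak{M}(\Lambda,p)$ the family $(\textsf{q}_i/p)_{i\leqslant 4a^4}$ realizes $q_0=2$ and $q_{i+1}=q_i\cdot q_i$ up to index $4(\underline{a}/p)^4$ (identifying $\underline{4a^4}/p$ with $4(\underline{a}/p)^4$ via Theorem~\ref{th-t1}), and the defining property~$(\divideontimes)$ of $T_2$ forces $\mathfrak{M}(\Lambda,p)\models 4(\underline{a}/p)^4\!\in\!\textfrak{log}^2$, that is $\underline{a}/p\!\in\!\log\mathcal{I}$.

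Combining the two conclusions, $\mathfrak{M}(\Lambda,p)\models T_0+[\underline{a}/p\!\in\!\log\mathcal{I}\wedge\theta(\underline{a}/p)]$, which directly contradicts the choice of $T_0$ and $\theta$ in Definition~\ref{def-t0}; this proves~(1), and~(2) follows by reading the same argument with $T_2$ from clause~(2) of Definition~\ref{def-t2} and the $\Pi_1$-axiomatization ${\rm I\Delta_0^\Pi}$ in place of the usual one. The delicate point, and the one most likely to derail the scheme, is the bookkeeping on $\log(\ulcorner\Lambda\urcorner)$ in the second paragraph: had the $\textsf{q}_i$'s been represented by their values (doubly-exponential in $i$) instead of as iterated Skolem terms whose G\"odel codes are only singly-exponential in $i$, the code $\ulcorner\Upsilon\urcorner$ would overshoot the totality domain guaranteed by $a\!\in\!\mathcal{I}$ and no evaluation $p$ of the required kind would live in $\mathcal{M}$; it is exactly this single-exponential gap in the coding of the $\textsf{q}_i$'s, together with the closure properties of $T_2$ encoded in~$(\divideontimes)$, that makes the whole argument go through.
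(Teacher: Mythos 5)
Your proposal is correct and follows essentially the same route as the paper's own proof: the same cut $\mathcal{I}$, the same three finite fragments $T_0,T_1,T_2$ and their respective roles, the same term set $\Lambda$ built from numerals $\underline{i}$ together with the iterated Skolem terms $\textsf{q}_i$ (whose singly-exponential codes you correctly identify as the crucial gap), the same passage from ${\rm HCon}(T)$ in $\mathcal{M}$ to a $T$-evaluation $p$ on $\Lambda^{\langle j\rangle}$ for a non-standard $j$, and the same use of Lemma~\ref{lem-hmodel}, Theorem~\ref{th-t1}, and property~$(\divideontimes)$ of $T_2$ to force $\mathfrak{M}(\Lambda,p)\models T_0 + \exists x\!\in\!\log\mathcal{I}\,\theta(x)$, contradicting Definition~\ref{def-t0}. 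The only differences are cosmetic: you dispatch the standard-$a$ case via absoluteness to the standard submodel (the paper notes more directly that a standard $a$ lies in $\log\mathcal{I}$, so $\mathcal{M}$ would model an inconsistent theory), and you cap $\Upsilon$ at $4a^4$ rather than $\omega_1(a)$, which is a tighter but equally adequate choice.
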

\begin{proof}
For the part (1) take $T_2$ as in clause~(1) of Definition~\ref{def-t2}, and for part (2) take $T_2$ as in clause~(2) of Definition~\ref{def-t2}, and let $U$ be the conjunction of the axioms of $T$. In each case we will have the Skolem function symbol $\textswab{q}(x)$ for squaring $x\mapsto x^2$.

By Theorem~\ref{th-ilogi} there exists a (fixed) bounded formula $\theta(x)$, for the cut $\mathcal{I}$ defined in Definition~\ref{def-cuti}, such that ${\rm I\Delta_0}\not\vdash\neg\exists x\in\!\mathcal{I}\!\theta(x)$ and $T_0\vdash\neg\exists x\!\in\!\log\mathcal{I}\theta(x)$ (see Definition~\ref{def-t0}). Fix $\mathcal{M}\models {\rm I\Delta_0}+[a\!\in\!\mathcal{I}\wedge\theta(a)]$. We show that $\mathcal{M}\not\models{\rm HCon}(T)$.

Assume, for the sake of contradiction, that $\mathcal{M}\models{\rm HCon}(T)$. Define the terms $\underline{i}$'s and $\textsf{q}_i$'s by induction: $\underline{0}=\textsf{0}$, $\underline{i+1}=\textsf{S}(\underline{i})$, $\textsf{q}_0=\underline{2}$, $\textsf{q}_{i+1}=\textswab{q}(\textsf{q}_i)$. Let $\Lambda$ be the set of terms $\{\underline{i}\mid i\leqslant\omega_a(a)\}\cup\{\textsf{q}_i\mid i\leqslant\omega_1(a)\}$ in $\mathcal{M}$.  As we saw earlier, the code of $\underline{i}$ (and $\textsf{q}_i$) are  bounded by some polynomial of $\exp(i)$ and the code of the $\Lambda$ is polynomially bounded by $\exp\big((\omega_1(a)^2)\big)$ or $\exp^2\big(2(\log a)^2\big)$, and finally $\omega_2(\ulcorner\Lambda\urcorner)$ is polynomially bounded by $\exp^2\big(4(\log a)^4\big)$; which exists by the assumption $a\!\in\!\mathcal{I}$. We note that $a$ is non-standard, because otherwise we would have $a\!\in\!\log\mathcal{I}$ and whence $\mathcal{M}$ would be a model of the inconsistent theory ${\rm I\Delta_0}+\exists x\!\in\!\log\mathcal{I}\theta(x)$; a contradiction with the hypothesis. The existence of $\omega_2(\ulcorner\Lambda\urcorner)$ assures the existence of a non-standard element $j (\leqslant\log^4(\ulcorner\Lambda\urcorner))$ for which $\Lambda^{\langle j\rangle}$ exists, and so by the assumption $\mathcal{M}\models{\rm HCon}(T)$ there must exist some $T-$evaluation $p$ on $\Lambda^{\langle j\rangle}$ (hence, on $\Lambda^{\langle\infty\rangle}$) in $\mathcal{M}$.  So, we can form the model $\mathfrak{M}(\Lambda,p)$. For this model we have $\mathfrak{M}(\Lambda,p)\models T$ by Lemma~\ref{lem-hmodel}. Since $\mathcal{M}\models\theta(a)$ (and $\mathfrak{M}(\Lambda,p)\models T_1$) then $\mathfrak{M}(\Lambda,p)\models\theta(\underline{a}/p)$ by Theorem~\ref{th-t1}. Also, since $\mathfrak{M}(\Lambda,p)\models T_2$ and $\textsf{q}_0,\textsf{q}_1,\ldots,\textsf{q}_b$ (for $b=4a^4$) are elements of $\mathfrak{M}(\Lambda,p)$ such that $\mathfrak{M}(\Lambda,p)\models \textsf{q}_0=2$ and $\mathfrak{M}(\Lambda,p)\models \textsf{q}_{i+1}=\textsf{q}_i^2$ for any $i<b$, then (by the main property of $T_2$) $\mathfrak{M}(\Lambda,p)\models``b\in\textfrak{log}^2"$. Or, in other words, $\mathfrak{M}(\Lambda,p)\models``\underline{a}/p\!\in\!\log\mathcal{I}"$. Whence, $\mathfrak{M}(\Lambda,p)\models [\underline{a}/p\!\in\!\log\mathcal{I}\wedge\theta(\underline{a}/p)]$.
So, $\mathfrak{M}(\Lambda,p)$ is a model of $T+\exists x\!\in\!\log\mathcal{I}\theta(x)$, and this is contradiction with the assumption of $T\supseteq T_0$ and the inconsistency of the theory $T_0+\exists x\!\in\!\log\mathcal{I}\theta(x)$. Thus $\mathcal{M}\not\models{\rm HCon}(T)$ and so ${\rm I\Delta_0}\not\vdash{\rm HCon}(T)$.
\end{proof}

%%%%%%%%%%%%%%%%%%%%%%%%%%%%%%%%%%%%
%%%%%%%%%%%%%%%%%%%%%%%%%
%%%%%%%%%%%%%%%%%%%%%%%%%%%%%%%%%%%%
%%%%%%%%%%%%%%%%%%%%%%%%%
%%%%%%%%%%%%%%%%%%%%%%%%%%%%%%%%%%%%
%%%%%%%%%%%%%%%%%%%%%%%%%

\end{document}